\theoremstyle{plain}
\newtheorem{Thm}{Theorem}
\newtheorem*{Thm*}{Theorem}
\newtheorem{Cor}[Thm]{Corollary}
\newtheorem*{Cor*}{Corollary}
\newtheorem{Prop}[Thm]{Proposition}
\newtheorem{Lma}[Thm]{Lemma}
\theoremstyle{definition}
\newtheorem{Def}[Thm]{Definition}
\theoremstyle{remark}
\newtheorem{Rem}[Thm]{Remark}
\numberwithin{equation}{section}
\newcommand{\brq}{^{[q]}}
 \newcommand{\op}[1]{\operatorname{#1}}
\newcommand{\mlabel}[1]%
  {\mbox{}\marginpar{\raggedleft\hspace{0pt}{\rm\ttfamily#1}}\label{#1}}
\newcommand{\into}{\operatorname{\hookrightarrow}}
\newcommand{\e}{\operatorname{e}}
\newcommand{\length}{\operatorname{\lambda}}
\newcommand{\Ass}{\operatorname{Ass}}
\newcommand{\Assh}{\operatorname{Assh}}
\newcommand{\Hom}{\operatorname{Hom}}
\newcommand{\fm}{{\mathfrak m}}
\newcommand{\m}{\fm}
\newcommand{\n}{{\mathfrak n}}
\newcommand{\ringR}{\text{$(R,\fm,k)$ }}
\newcommand{\et}{0^{\ast}_{E_{R}}}
\newcommand{\bx}{\bold x}
\newcommand{\inc}{\subseteq}
\newcommand{\ehk}{\e_{HK}}
\newcommand{\8}{\infty}
\newcounter{hours}\newcounter{minutes}
\numberwithin{Thm}{section}
\numberwithin{equation}{section}
\newcommand{\excise}[1]{}
\begin{document}
\title
{Lower Bounds for
Hilbert-Kunz multiplicities in local rings of fixed dimension}

%\texttt{{--Working  version--}}

\author{Ian M. Aberbach}

\address{Department of Mathematics, University of Missouri, Columbia, MO 65211}\email{aberbach@math.missouri.edu}

\author{Florian Enescu}

\address{Department of Mathematics and Statistics, Georgia State University, Atlanta, GA 30303}\email{fenescu@gsu.edu}
\thanks{The second author was partially supported by an Young Investigator 
Grant H98230-07-1-0034 from the National Security Agency.}
\maketitle

\begin{abstract} Let $(R,\m)$ be a formally unmixed local ring of positive prime
characteristic and 
dimension $d$.   We examine the implications of having
small Hilbert-Kunz multiplicity (i.e., close to $1$).
In particular, we show that if $R$ is not regular, there
exists a lower bound, strictly greater than one, depending only on $d$,
 for its Hilbert-Kunz multiplicity.
\end{abstract}

\section{Introduction}\label{intro}

Let $\ringR$ be a local ring of positive characteristic $p$, that is,
quasi-local (only one maximal ideal) and Noetherian. Let $q
=p^e$, where $e$ is a nonnegative integer. For any ideal $I$ of $R$ we
denote $I^{[q]} = ( i^q : i \in I).$

For an $\fm$-primary ideal $I$, one can consider the Hilbert-Samuel multiplicity and
the Hilbert-Kunz multiplicity of $I$ with respect to $R$.

\begin{Def}
 {\rm Let $I$ be an $\fm$-primary ideal in $(R,\fm)$. 
Let $\length(-)$ denote the usual length function.

1. {\it The Hilbert-Samuel multiplicity of $R$ at $I$} is defined by $\e (I)=
\e(I;R) := \displaystyle\lim_{n \to \infty} d!\, \frac{\length(R/I^n)}{n^d}$. The
limit exists and it is a positive integer.

2. {\it The Hilbert-Kunz multiplicity of $R$ at $I$} is defined by $\e _{HK}
(I)= \e _{HK}(I;R): = \displaystyle\lim_{q \to \infty} 
\frac{\length(R/I^{[q]})}{q^d}$.  Monsky has shown that the latter
 limit exists and 
is positive.} 

The Hilbert-Samuel multiplicty of $R$, denoted $\e(R)$, is by definition
$\e(\m)$. Similarly,
the Hilbert-Kunz multiplicity of $R$, denoted  $\ehk(R)$, is $\ehk(\m)$.
\end{Def}

It is known that for parameter ideals $I$, one has $\e(I) = \e_{HK}(I)$. The
following sequence of inequalities is also known to hold whenever
$I$ is $\m$-primary: 
$${\rm max} \{ 1,
\dfrac{\e(I)}{d!}\} \leq \e_{HK} (I) \leq \e(I).$$ 

We call a local ring $R$ {\it formally unmixed} if  
$\hat{R}$ is equidimensional and ${\rm Min}(\hat{R}) =
{\rm Ass}(\hat{R})$, that is, 
$\dim(\hat{R}/P) = \dim(\hat{R})$ for all its minimal primes $P$, and
all associated primes of $\hat{R}$ are minimal. Nagata calls such
rings {\it unmixed}. 
However, throughout our paper, a local
unmixed ring is a local ring $R$ that is equidimensional and ${\rm Min}(R) =
{\rm Ass}(R)$. 

% A formally
% unmixed local ring is a local ring $\ringR$ such that its $\m$-adic completion
% $\hat{R}$ is unmixed.

In this paper we investigate  rings that have small Hilbert-Kunz
multiplicity. It is known that a formally unmixed local ring of characteristic $p$ is
regular if and only if $\e_{HK}(R) =1$. 
 In fact, similar statements hold true for the Hilbert-Samuel
multiplicity and they are considered classical.
(The unmixedness assumption is
essential as there are examples of nonregular rings that are not
formally unmixed with
$\e_{HK}(R) =1$.  The reason is that neither Hilbert-Samuel multiplicity
nor Hilbert-Kunz multiplicity can pick up lower dimensional components
of $\hat{R}$). Since $\e(R)$ is always a
positive integer we have that $\e(R) \geq 2$ if $R$ is formally unmixed 
but not regular. The
situation is much more subtle in the case of the Hilbert-Kunz multiplicity
because it often takes on non-integer values. So, the question becomes: If
one fixes the dimension $d$, how close to $1$ can $\e_{HK}(R)$ be (when $R$ is
formally unmixed, but not regular)? What can be said about the structure of rings of small
Hilbert-Kunz multiplicity? This problem has been intensively
 studied in recent years (with success mostly
for rings of small dimension) by Blickle-Enescu~\cite{BE}, 
Watanabe-Yoshida~\cite{WY}, \cite{WY2}, \cite{WY3}, 
and Enescu-Shimomoto~\cite{ES}.  In the current paper, we will develop 
techniques that shed light on this problem independent of dimension. We show that if $R$ is not regular, there
exists a lower bound, strictly greater than one, depending only on $d$,
 for its Hilbert-Kunz multiplicity.

The goal is at least twofold: find the following constants (as introduced
in~\cite{BE}),

\[
    \epsilon_{HK}(d,p) = \inf\{\e_{HK}(R)-1 : \text{$R$ non-regular,
    formally unmixed, $\dim R = d$, $\op{char} R = p$} \}
\]
\noindent 
and 

\[ \epsilon_{HK}(d) = \inf\{\epsilon_{HK}(d,p) : p > 0\} \] \noindent and
describe the structure of the rings with small Hilbert-Kunz multiplicity from
both an algebraic and geometric point of view.  

It is known that $\epsilon_{HK}(d,p) \geq \dfrac 1{d!p^d}$ by results in~\cite{BE}. 
Clearly, however, as $p\to \8$, the right hand side tends toward $0$, so
this does not give a positive lower bound for $\epsilon_{HK}(d)$.  A
byproduct of our work is that it leads us to a proof of the fact that
$\epsilon_{HK}(d) >0$, answering positively a problem raised in~\cite{BE},
Section 3. We should mention  that a conjecture of Watanabe and
Yoshida~\cite{WY3} asserts that if $\ringR$ has residue field equal to
$\overline{\mathbf{F}_p},\, p>2$, then $\ehk(R) \geq \ehk(R_{p,d})$, where
$R_{p,d} = \overline{\mathbf{F}_p}[[x_0,\ldots, x_d]]/(x_0^2+\cdots+x_d^2).$
This conjecture has been answered positively for dimensions $d=1,2,3,4$ (the
difficult cases of dimension $3,4$ are due to Watanabe and Yoshida) and in the
case of complete intersections by Enescu and Shimomoto (\cite{ES}).

The starting point of our investigation is the following:
\begin{Thm}[Blickle-Enescu]\label{thm:BE}
\label{be} Let $R$ be an unmixed $d$- dimensional ring that is a homomorphic image
of a  Cohen-Macaulay  local ring  of characteristic  $p  > 0$. Let $d \geq 2$. If
$$\e_{HK}(R) \leq 1 + \op{max}\{1/d!,1/\e(R)\},$$ then $R$ is
Cohen-Macaulay and F-rational.
\end{Thm}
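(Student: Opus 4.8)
\textbf{Overview and reductions.} The plan is to compare $\ehk(\m)$ with $\ehk(J)$ for a minimal reduction $J$ of $\m$ and with the colength $\length(R/J)$ --- quantities that agree when $R$ is Cohen--Macaulay and F-rational --- by applying the Frobenius functor to $\m/J$, and to show that failure of either property pushes $\ehk(\m)$ past the stated threshold. First, passing to $\widehat R$ alters none of $\ehk,\ \e,\ \dim$, preserves the hypotheses (a complete local ring is a homomorphic image of a regular, hence Cohen--Macaulay, ring), and the two conclusions descend faithfully flatly; after further replacing $R$ by $R[x]_{\m R[x]}$ we may assume $R$ is complete with infinite residue field. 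If $R$ is not reduced then, $R$ being unmixed, its nilradical is full-dimensional, so the associativity formula for Hilbert--Kunz multiplicities gives $\ehk(R)\ge\ehk(R_{\mathrm{red}})+1\ge2>1+\max\{1/d!,1/\e(R)\}$ for $d\ge2$; thus we may assume $R$ reduced --- hence excellent, equidimensional with $\Min R=\Ass R$, and possessing a completely stable test element. If $\mu(\m)=d$ then $R$ is regular, so assume $\mu(\m)>d$. Since $\ehk(R)\ge\e(R)/d!$, the hypothesis forces $\e(R)\le d!+1$. Fix a minimal reduction $J=(x_1,\dots,x_d)$ of $\m$, a parameter ideal with $\e(J)=\e(R)$; recall that $\length(R/J)=\e(R)$ holds exactly when $x_1,\dots,x_d$ is a regular sequence, i.e.\ exactly when $R$ is Cohen--Macaulay, and that (by colon-capturing and the Hochster--Huneke colength theorem, available here) $\ehk(I)=\ehk(I')$ for $\m$-primary $I\subseteq I'$ forces $I'\subseteq I^{*}$, while always $\ehk(I)=\ehk(I^{*})$.

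\textbf{The controlling identity.} Apply the Peskine--Szpiro Frobenius functor $F^{e}$ (so that $F^{e}(R/\mathfrak a)=R/\mathfrak a\brq$) to the exact sequence $0\to\m/J\to R/J\to k\to0$:
\[
\Tor{1}{R}{k}{R^{(e)}}\ \to\ F^{e}(\m/J)\ \to\ R/J\brq\ \to\ R/\m\brq\ \to\ 0,
\]
where $R^{(e)}$ is $R$ viewed as a module over itself via the $e$-th Frobenius. When $R$ is Cohen--Macaulay, $x_1,\dots,x_d$ is a regular sequence on the maximal Cohen--Macaulay module $R^{(e)}$, so $\Tor{1}{R}{R/J}{R^{(e)}}=0$, the displayed sequence is exact at the $\tor$ term as well, and $\length(R/J\brq)=q^{d}\e(R)$. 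Comparing lengths, dividing by $q^{d}$, and letting $q\to\infty$ gives, for Cohen--Macaulay $R$,
\[
\ehk(\m)=\e(R)-\ehk(\m/J)+\tau,\qquad \tau:=\lim_{q\to\infty}q^{-d}\,\length\bigl(\Tor{1}{R}{k}{R^{(e)}}\bigr)\ \ge\ 0,
\]
with $\ehk(\m/J)$ the Hilbert--Kunz multiplicity of the finite-length module $\m/J$. As $\m/J$ has a composition series with $\length(R/J)-1=\e(R)-1$ factors $k$, and $\ehk(k)=\ehk(\m)$, subadditivity of module Hilbert--Kunz multiplicity gives $\ehk(\m/J)\le(\e(R)-1)\,\ehk(\m)$, whence
\[
\ehk(\m)\ \ge\ 1+\frac{\tau}{\e(R)}\qquad(R\ \text{Cohen--Macaulay}).
\]
Here $\tau>0$ precisely when $R^{(e)}$ fails to be free, i.e.\ (Kunz) precisely when $R$ is not regular.

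\textbf{Reaching the conclusion.} Suppose $R$ is not Cohen--Macaulay, so $\length(R/J)\ge\e(R)+1$. Choosing a system of parameters that is not a regular sequence and invoking colon-capturing yields a nonzero class in the tight closure of a partial parameter ideal; a length count parallel to the controlling identity --- now using that this colength defect persists through $F^{e}$ as a genuinely $d$-dimensional contribution to $\length(R/\m\brq)-q^{d}$ --- forces $\ehk(\m)>1+\max\{1/d!,1/\e(R)\}$, contradicting the hypothesis. Hence $R$ is Cohen--Macaulay. Now suppose $R$ is not F-rational: a Cohen--Macaulay unmixed homomorphic image of a Cohen--Macaulay ring is F-rational exactly when some --- equivalently, by colon-capturing, every --- parameter ideal is tightly closed, so $J^{*}\supsetneq J$. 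Choose $z\in J^{*}\setminus J$ with $\m z\subseteq J$; then $J\subseteq J+(z)\subseteq J^{*}$ gives $\ehk(J+(z))=\ehk(J)=\e(R)$, while $\length\bigl(R/(J+(z))\bigr)=\e(R)-1$. Running this extra tightly closed element through the Frobenius analysis forces $\tau>1$ --- and $\tau>1+1/d!$ when $\e(R)=d!+1$ --- so that $\ehk(\m)\ge1+\tau/\e(R)>1+\max\{1/d!,1/\e(R)\}$, again a contradiction. Therefore $R$ is Cohen--Macaulay and F-rational.

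\textbf{The main obstacle.} The skeleton above is routine; the weight of the argument is in the two quantitative passages of the previous paragraph --- converting ``$R$ not Cohen--Macaulay'' and ``$J^{*}\supsetneq J$'' into the sharp bounds $\ehk(\m)>1+\max\{1/d!,1/\e(R)\}$ with their dimension-dependent constants. This forces one to bring in the finer structure theory: Watanabe--Yoshida-type inequalities relating $\e(R)$, $\length(R/J)$ and $\ehk(\m)$, an effective lower bound for $\tau=\lim_{q\to\infty}q^{-d}\length(\Tor{1}{R}{k}{R^{(e)}})$, and control of the syzygies of $\m\brq$ governing $\ehk(\m/J)$. The constants $1/d!$ and $1/\e(R)$ surface precisely from playing the Frobenius filtration $\{\m\brq\}_{q}$ off against the Hilbert--Samuel filtration $\{\m^{n}\}_{n}$ and against the parameter ideal $J$, and establishing these estimates is the real content of the theorem.
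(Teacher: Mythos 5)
The paper itself does not prove this statement; it quotes it from Blickle--Enescu's earlier paper \cite{BE} and uses it as a black box. So the only honest assessment is of your proposal on its own terms, and it is not a proof. The preliminary reductions and the Frobenius long exact sequence obtained from $0 \to \m/J \to R/J \to k \to 0$, yielding
$\ehk(\m) = \e(R) - \ehk(\m/J) + \tau$ with $\tau = \lim_q q^{-d}\length\bigl(\Tor{1}{R}{k}{R^{(e)}}\bigr)$ in the Cohen--Macaulay case, are sound and are a reasonable skeleton. But the two passages that would actually deliver the conclusion are asserted rather than established, and you concede as much in your closing paragraph. Concretely: (i) you propose to rule out non-Cohen--Macaulayness by ``a length count parallel to the controlling identity,'' yet that identity was derived using $\Tor{1}{R}{R/J}{R^{(e)}}=0$, which holds \emph{because} $J$ is a regular sequence on the maximal Cohen--Macaulay module $R^{(e)}$ --- i.e., precisely because $R$ is Cohen--Macaulay. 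If $R$ is not Cohen--Macaulay, that exact sequence is not exact on the left, the identity fails, and nothing ``parallel'' is available without a new argument. (ii) In the F-rationality step, from $z\in J^{*}\setminus J$ you assert that ``the Frobenius analysis forces $\tau>1$'' (and $\tau>1+1/d!$ when $\e=d!+1$), but you give no mechanism transferring a tight-closure relation $cz^q\in J\brq$ into a leading-order lower bound on $\length(\Tor{1}{R}{k}{R^{(e)}})$. That inference is exactly the substance of the theorem.

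What is missing is a genuine colength inequality linking tight closure of parameter ideals to Hilbert--Kunz lengths --- in \cite{BE} this is supplied by a theorem of Watanabe--Yoshida for unmixed homomorphic images of Cohen--Macaulay rings (roughly $\ehk(J)\geq\length(R/J^{*})$ for parameter ideals $J$, with $\length(R/J^{*})>\e(R)$ in the non-CM case and $\length(R/J^{*})>\length(R/\m)$ in the non-F-rational case), combined with the elementary bound $\ehk(R)\geq\e(R)/d!$ and a star-length-type estimate relating $\ehk(J)$ and $\ehk(\m)$. Some such bridge is indispensable; the Frobenius $\Tor$-sequence by itself records the information but does not bound $\tau$ from below, and your proposal never produces the numerical estimates that the statement actually asserts.
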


\begin{Rem}
The proof of the above result shows that, in 
fact, the inequality $\ehk(R) < \dfrac{\e(R)}{\e(R)-1}$ forces $R$
to be Cohen-Macaulay and F-rational.
\end{Rem}

In fact, the hypotheses of Theorem~\ref{thm:BE} suffice to show that 
$R$ must be (strongly) F-regular.  This is the content of 
Corollary~\ref{smallehk} which states:
\begin{Cor*}
 Let
$(R,\fm,k)$ be a formally unmixed ring  of characteristic $p$ and $\dim(R) = d \geq 2$.
  If
$\e_{HK}(R) \leq 1 + \op{max}\{1/d!,1/\e(R)\}$, then $R$ is 
F-regular and Gorenstein. 
If $R$ is excellent, then $R$ is strongly F-regular. 
\end{Cor*}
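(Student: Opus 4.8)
The plan is to get everything out of Theorem~\ref{thm:BE} together with the classical behaviour of Gorenstein rings. First I would reduce to the complete case and apply Theorem~\ref{thm:BE} to obtain a normal Cohen--Macaulay F-rational domain; then use the numerical hypothesis to force the Hilbert--Samuel multiplicity down to $\e(R)\le 2$, which makes $R$ Gorenstein; and finally invoke the equivalence of F-rationality with F-regularity for Gorenstein rings.

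Reduction to the complete case. Since $R$ is formally unmixed, $\Rhat$ is unmixed; and since $\op{char}R=p>0$, the ring $\Rhat$ contains a field, so by Cohen's structure theorem it is a homomorphic image of a regular (in particular Cohen--Macaulay) local ring of characteristic $p$. As $\ehk(\Rhat)=\ehk(R)$, $\e(\Rhat)=\e(R)$ and $\dim\Rhat=d\ge 2$, the ring $\Rhat$ satisfies the hypotheses of Theorem~\ref{thm:BE}, so $\Rhat$ is Cohen--Macaulay and F-rational. An F-rational homomorphic image of a Cohen--Macaulay ring is normal, and a normal local ring is a domain, so $\Rhat$ is a normal Cohen--Macaulay domain possessing a canonical module. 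All the asserted conclusions pass from $\Rhat$ down to $R$: being Gorenstein is insensitive to completion; if $\Rhat$ is weakly F-regular then so is $R$, because $\Rhat$ is then a domain and hence $I^{*}\subseteq(I\Rhat)^{*}\cap R=I$ for every ideal $I$ of $R$; and when $R$ is excellent the same holds for strong F-regularity. So we may assume $R$ is complete.

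The crux is to prove that $R$ is Gorenstein, which I would do by showing $\e(R)\le 2$. If $\e(R)=1$, then $R$ is regular, by the classical characterization of regularity for unmixed rings. If $\e(R)=2$, then Abhyankar's inequality for Cohen--Macaulay rings gives $\codim R=\edim R-\dim R\le\e(R)-1=1$, so $R$ is either regular or (being a codimension-one Cohen--Macaulay quotient of a regular local ring) a hypersurface; in either case $R$ is Gorenstein. It remains to rule out $\e(R)\ge 3$. Assuming $\e(R)\ge 3$, the goal is to derive
\[
\ehk(R) \;>\; 1 + \max\bigl\{1/d!,\;1/\e(R)\bigr\},
\]
contradicting the hypothesis. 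The real content is thus a dimension-uniform lower bound for the Hilbert--Kunz multiplicity of a Cohen--Macaulay F-rational local ring of dimension $d$ with $\e(R)\ge 3$: an estimate of the shape $\ehk(R)\ge 1+(\e(R)-1)/d!$ would already settle the range $\e(R)\ge d!$, where the maximum above equals $1/d!$, while in the remaining range $3\le\e(R)<d!$ one needs the sharper $\ehk(R)>1+1/\e(R)$. I would attack this using the F-rationality input (normality, and triviality of the tight closure of $0$ in $H^{d}_{\fm}(R)$), a lower estimate for $\length(R/\fm^{[q]})$ obtained by localizing at well-chosen primes or by a filtration argument, and a calibration against the extremal F-rational examples --- in dimension two the Veronese/scroll cones $k[[x,y]]^{(n)}$, for which $\ehk=(n+1)/2$ and $\e=n$, so that there $\ehk=1+(\e-1)/2$ is sharp, with the determinantal (Segre and scroll) cones playing the analogous extremal role in higher dimension. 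Making this estimate precise and uniform in $d$, especially in the delicate range $3\le\e(R)<d!$, is the main obstacle; the rest of the argument is either classical or already contained in Theorem~\ref{thm:BE}.

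Finally, granting that $R$ is Gorenstein: in a Gorenstein local ring, parameter ideals are irreducible and linkage propagates the triviality of tight closure, so triviality of the tight closure of parameter ideals (i.e.\ F-rationality) forces every ideal to be tightly closed; hence $R$ is (weakly) F-regular, and, being Gorenstein, a normal domain. If in addition $R$ is excellent, then for Gorenstein rings F-rationality implies strong F-regularity, so $R$ is strongly F-regular. This completes the proof.
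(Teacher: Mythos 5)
There is a genuine gap, and it is exactly at the spot you flag yourself: the proposal never establishes that $R$ is Gorenstein. Worse, the route you choose toward Gorensteinness is not the right one. You set out to show $\e(R)\le 2$, but the hypothesis does not force $\e(R)\le 2$ — the paper only extracts $\e(R)\le d!$, and then concludes Gorensteinness from a bound on the Cohen--Macaulay type, not on the multiplicity. The missing ingredient is the Matlis-duality estimate of Theorem~\ref{ehk-hom} and its consequence Corollary~\ref{Gor}: if $R$ is Cohen--Macaulay of type $t$ and multiplicity $\e$, then
\[
\ehk(R)\;\ge\;\frac{\e}{\e-t+1}.
\]
With this in hand the paper argues as follows. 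After completing and invoking Theorem~\ref{thm:BE} to get Cohen--Macaulay (your reduction here is fine), one uses Hanes' inequality $\ehk(R)\ge\e/d!$ to force $\e\le d!$; hence $1/\e\ge 1/d!$ and the hypothesis becomes $\ehk(R)\le 1+1/\e<\e/(\e-1)$. If $t\ge 2$ the type bound gives $\ehk(R)\ge\e/(\e-t+1)\ge\e/(\e-1)$, a contradiction, so $t=1$ and $R$ is Gorenstein. The same inequality (via Corollary~\ref{sfr}) also rules out failure of F-regularity: a non--F-regular Cohen--Macaulay ring with this $\e$ would again have $\ehk(R)\ge\e/(\e-1)$. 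So the argument does not need, and does not get, any bound as strong as $\e\le 2$, nor any calibration against Veronese or scroll cones.

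Two smaller points. First, your ``Abhyankar'' step treats the case $\e=2$ using $\edim R-\dim R\le\e-1$, but this inequality requires $R$ to be Cohen--Macaulay, which you only have after invoking Theorem~\ref{thm:BE}; state the order of dependencies. Second, the final paragraph (Gorenstein $+$ F-rational $\Rightarrow$ F-regular, and excellent Gorenstein F-rational $\Rightarrow$ strongly F-regular) is correct and matches the paper. The reduction-to-complete paragraph is also fine. But the central claim --- that the hypothesis forces Gorensteinness --- is left unproved, and the proposed strategy for it would not close the gap even if the intermediate estimates you sketch were supplied, because they aim at the wrong target.
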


Theorem~\ref{lowerbound} gives a positive lower bound for $\epsilon(d)$
which does not depend on $p$:
\begin{Thm*}
Let $\ringR$ be a formally unmixed local ring of positive characteristic $p$
and dimension $d$.
If $R$ is not regular then 
$$\ehk(R) \geq 1+ \frac{1}{d \cdot (d! (d-1)+1)^d}.$$
\end{Thm*}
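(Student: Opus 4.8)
The plan is to reduce to the case where $R$ is complete with algebraically closed residue field (so that $\e(R)$-type invariants and tight closure behave well), and then run a dichotomy on the Hilbert-Samuel multiplicity $\e(R)$. If $\e(R)$ is ``large'', a crude but $p$-independent lower bound on $\ehk(R)-1$ should follow from the general inequality $\ehk(R)\ge \e(R)/d!$ together with the fact that $\e(R)\ge 2$; the interesting regime is when $\e(R)$ is bounded above by some explicit function of $d$. In that bounded regime I would invoke Theorem~\ref{thm:BE} (and the sharper Remark, $\ehk(R)<\e(R)/(\e(R)-1)\Rightarrow R$ Cohen--Macaulay and F-rational) contrapositively: either $\ehk(R)$ already exceeds $1+1/\e(R)\ge 1+1/(\text{explicit bound})$, giving the desired estimate directly, or else $R$ is Cohen--Macaulay and F-rational. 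So the crux is to produce a $p$-independent lower bound for $\ehk(R)-1$ for a non-regular Cohen--Macaulay F-rational ring of dimension $d$ with $\e(R)$ controlled.

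For that crux step I would work with a minimal reduction: choosing a system of parameters $\underline{x}=x_1,\dots,x_d$ generating a minimal reduction $J=(\underline{x})$ of $\fm$, one has $\length(R/J)=\e(J)=\e(R)$ since $R$ is Cohen--Macaulay, and $\ehk(R)=\ehk(\fm)$ can be compared to $\ehk(J)=\e(J)=\e(R)$ via the colengths $\length(\fm^{[q]}/J^{[q]})$. The key point is that $R$ non-regular forces $\fm^{[q]}\ne J^{[q]}$, i.e. the colength $\length(R/\fm^{[q]})$ is strictly smaller than $\length(R/J^{[q]})=\e(R)q^d$ by a nontrivial amount; I want to show that amount grows at least like a fixed positive multiple of $q^d$, with the multiple depending only on $d$ (via the bound on $\e(R)$ and hence on the embedding dimension / number of generators of $\fm$). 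This is where F-rationality is essential: it gives $J^{[q]}$ (more precisely, $J$-related Frobenius powers) tight-closure-full inside $R$, and lets one leverage that a single minimal generator of $\fm$ not in $J$ contributes, after taking $q$-th powers and using the Brian\c{c}on--Skoda-type or colon-capturing estimates available for F-rational rings, a definite chunk of colength. The explicit constant $d\cdot(d!(d-1)+1)^d$ in the statement strongly suggests bounding $\e(R)$ (equivalently the number of ``extra'' generators of $\fm$ over $J$, which is $\edim R - d$) by something like $d!(d-1)+1$ and then distributing one unit of colength among at most $d$ Frobenius-twisted layers, so I would aim the estimate to land on exactly that arithmetic.

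Concretely the steps are: (1) standard reductions to $R$ complete, equidimensional, reduced, with infinite/algebraically closed residue field, noting $\ehk$ is unchanged; (2) the dichotomy on $\e(R)$ using $\ehk(R)\ge \max\{1,\e(R)/d!\}$ to dispose of the large-$\e$ case; (3) in the small-$\e$ case, apply Theorem~\ref{thm:BE}/its Remark to either conclude directly or reduce to $R$ Cohen--Macaulay and F-rational; (4) for such $R$, fix a minimal reduction $J$ of $\fm$ and use F-rationality (colon-capturing, $\fm^{[q]}:_R \fm \supseteq J^{[q-1]}$-type containments, and the test-ideal machinery) to show $\length(R/\fm^{[q]}) \le \e(R)q^d - c\, q^d$ for a constant $c=c(d)>0$, hence $\ehk(R)\le \e(R)-c$; (5) combine with the trivial $\ehk(R)\ge 1$ plus a complementary lower estimate to close the gap and extract $\ehk(R)-1\ge 1/(d\cdot(d!(d-1)+1)^d)$. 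The main obstacle is step (4): extracting a \emph{uniform}, $p$-independent lower bound on the colength deficit $\e(R)q^d-\length(R/\fm^{[q]})$ purely from F-rationality and the bounded embedding dimension, since a priori this deficit could degrade as $p\to\infty$; controlling it presumably requires a careful, characteristic-free counting of socle elements or of relations among the $q$-th powers of a minimal generating set, which is the technical heart of the paper.
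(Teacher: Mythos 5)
Your steps (1)--(3) match the paper's reductions: pass to complete, algebraically closed residue field; use $\ehk(R)\ge \e(R)/d!$ to dispose of $\e(R)\ge d!+1$; and use the improvement of Theorem~\ref{thm:BE} (Corollary~\ref{smallehk}) to reduce to the case where $R$ is (in fact Gorenstein and) F-regular. But step (4), the crux, does not work as stated, and it is also not where the paper goes. You propose to show $\length(R/\m^{[q]})\le \e(R)q^d - c\,q^d$, i.e.\ $\ehk(R)\le \e(R)-c$ for some $c=c(d)>0$. That is an \emph{upper} bound on $\ehk(R)$; the theorem asks for a \emph{lower} bound on $\ehk(R)-1$, and there is no way to ``combine with the trivial $\ehk(R)\ge 1$'' to close that gap: knowing $1\le \ehk(R)\le \e(R)-c$ is entirely consistent with $\ehk(R)$ being arbitrarily close to $1$. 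So the logical structure of your step (4)--(5) does not produce the desired inequality even granting the claimed colength estimate.

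The paper's mechanism for the F-regular Gorenstein case is quite different and is the real content of Section~4. Fix a minimal reduction in sufficiently general position, pick a socle element $u$ modulo it, let $r$ be the largest $i$ with $u\in \m^i+(\ux)$, and set $n=\lceil d/r\rceil$. One then forms a chain of radical extensions $R=R_0\subset R_1\subset\cdots$, where $R_i=R_{i-1}[y_i^{1/n}]$ adjoins an $n$th root of a minimal generator. Theorem~\ref{useroots} (via Corollary~\ref{GorFreg}) gives the transfer inequality $\ehk(R_{i-1})-1\ge \bigl(\e(n-1)+1\bigr)^{-1}\bigl(\ehk(R_i)-1\bigr)$; Remark~\ref{keepreduction} shows the $R_i$ stay Gorenstein with the same multiplicity and that $u$ remains a socle element. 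The Brian\c con--Skoda theorem then forces some $R_{i_0}$ with $i_0\le d$ to fail to be F-regular, and at that point Corollary~\ref{Gor-non-F-reg} supplies the uniform bound $\ehk(R_{i_0})\ge 1+1/d$ for a non-F-regular Gorenstein ring. Unwinding the chain gives $\ehk(R)\ge 1+\bigl(\e(n-1)+1\bigr)^{-i_0}/d$, and substituting $\e\le d!$, $n\le d$, $i_0\le d$ yields the stated constant. None of this is in your proposal: you never introduce root extensions, never use the comparison of $\ehk$ across module-finite extensions (Theorem~\ref{extendehk}), and never use the duality-based bound for non-F-regular Gorenstein rings. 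Those are the missing ideas, and without them the ``uniform, $p$-independent colength deficit'' you flag as the main obstacle remains exactly that.
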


While this result shows that $\epsilon(d) > 0$, our techniques can be refined to give sharper estimates. In a future paper, 
we will give results that are considerably better, but the cost is that the arguments are very much
more technical, so we have opted to give a more accessible proof of the fact that such an $\epsilon(d)$ exists. Although the above mentioned conjecture of Watanabe and Yoshida is still open, 
we have developed techniques that, for the first time, work regardless of dimension or additional hypotheses on the rings.

In dealing with Hilbert-Kunz multiplicities it  often useful to assume that 
the
rings that are studied are either formally unmixed or unmixed and homomorphic
images of Cohen-Macaulay rings. This will also be the case in our paper.

\medskip
\noindent
{\bf Acknowledgment:} We thank the referee for his/her careful reading of the original 
manuscript, and for a number of important corrections and improvements.

\section{Definitions and known results}\label{known}
First we would like to review some definitions and results that will be useful
later. Throughout the paper $R$ will be a Noetherian ring containing a field of 
characteristic $p$, where $p$ is prime.
Also, $q$ will denote $p^e$, a varying power of $p$.

If $I$ is an ideal in $R$, then $I^{[q]}=(i^q: i \in I)$, where
$q=p^e$ is a power of the characteristic. Let $R^{\circ} = R \setminus
\cup P$, where $P$ runs over the set of all minimal primes of $R$. An
element $x$ is said to belong to the {\it tight closure} of the ideal
$I$ if there exists $c \in R^{\circ}$ such that
$cx^q \in I^{[q]}$ for all sufficiently large $q=p^e$. The
tight closure of $I$ is denoted by $I^\ast$. By a ${\it parameter \
ideal}$ we mean here an ideal generated by a full system of parameters
in a local ring $R$. A tightly closed ideal of $R$ is an ideal $I$
such that $I = I^*$.

Let $F:R \to R$ be the Frobenius homomorphism $F(r)=r^p$. We denote by
$F^e$ the $e$th iteration of $F$, that is $F^e(r) = r^{q}$, $F^e:R
\to R$. One can regard $R$ as an $R$-algebra via the homomorphism
$F^e$. Although as an abelian group it equals $R$, it has a different
scalar multiplication. We will denote this new algebra by $R^{(e)}$.
For an $R$-module $M$ we let $F^e(M) = R^{(e)}\otimes_R M$, where
we consider this an $R$-module via $R^{(e)}$, i.e., $a(r\otimes m) = (ar)
\otimes m$, but $r \otimes (am) = a^qr \otimes m$.  
For an element $m \in M$, let $m^q = 1 \otimes m \in F^e(M)$.
If $N \inc M$ then
we denote the image of $F^e(N)$ in $F^e(M)$ by $N\brq$, and this is
the same as the submodule of $F^e(M)$ generated by the elements $n^q$ for
$n \in N$.   We then say that $x\in M$ is in the tight closure of 
$N$ in $M$, denoted $N_M^*$, if there exists $c \in R^0$ such that
$cx^q \in N\brq$ for all $q \gg 0$. 

\begin{Def}
 $R$ is \emph{F-finite}  if $R^{(1)}$ is module finite over $R$, or, 
equivalently (in the case that $R$ is reduced),
 $R^{1/p}$ is module finite over $R$.  $R$ is called 
\emph{F-pure} if  the Frobenius homomorphism is a pure map, i.e,  
 $F \otimes_R M$ is injective for every $R$-module $M$.
\end{Def}

If $R$ is F-finite, then $R^{1/q}$ is module finite over $R$, for
every $q$. Moreover, any quotient and localization of an F-finite
ring is F-finite. Any finitely generated algebra over a perfect
field is F-finite. An F-finite ring is excellent.

\begin{Def}
A reduced Noetherian F-finite ring $R$ is \emph{strongly F-regular}  if 
for every $c \in R^0$ there exists $q$ such that the $R$-linear map $R \to R^{1/q}$ that sends $1$ to $c^{1/q}$ 
splits over $R$, or equivalently $Rc^{1/q} \subset R^{1/q}$ splits over $R$.
\end{Def}

The notion of strong F-regularity localizes well, and all ideals are tightly
closed in strongly F-regular rings. Regular  rings are strongly F-regular
and  strongly F-regular rings are Cohen-Macaulay and normal.

Let $E_R(k)$ be the injective hull of the residue field of $R$. Then an
F-finite ring reduced $R$ is
strongly F-regular if and only if $\et =0$, see for example~\cite{S1}, 7.1.2. More generally, when 
$(R,\m)$ is reduced, excellent (but not necessarily F-finite) we will say
that $R$ is strongly F-regular if $\et = 0$. 

\begin{Def} A ring $R$ is called F-rational if all parameter ideals are
tightly closed. A ring $R$ is called weakly F-regular if all ideals are 
tightly
closed. The ring $R$ is F-regular if and only if $S^{-1}R$ is weakly
F-regular for all multiplicative sets $S \subset R$. 
\end{Def}

Regular rings are (strongly) F-regular.  For Gorenstein rings, the notions
of F-rationality and F-regularity coincide (and if in addition the
ring is excellent, these coincide with strong F-regularity).

\begin{Def} Let $ I \subseteq J$ be two $\m$-primary ideals in $\ringR$ and $M$
a finitely generated $R$-module. 
The Hilbert-Kunz multiplicity of $I$ on $M$ is $\ehk(I;M) = \lim_{q\to\8}
\dfrac 1 {q^d} \length(M/I\brq M)$.  
The relative Hilbert-Kunz multiplicity of $I$
and $J$ on $M$ is 
$ \ehk(I,J; M) = \ehk(I; M) -\ehk(J; M)$.

When $M=R$, we simply drop it from the notation.

\end{Def}

\begin{Prop}[Associativity formula, see Prop 1.2 (5) in~\cite{WY3}]
\label{assoc}
Let $\ringR$ be a local ring and $I$ an $\fm$-primary ideal of $R$. Denote ${\rm Assh}(R) = \{ P \in {\rm Ass}(R): \dim(R/P) =\dim (R) \}$. Then
$$ \ehk(I;M) = \sum _{P \in {\rm Assh}(R)} \length_{R_P}{(M_P)} \cdot \ehk(I;R/P).$$  
\end{Prop}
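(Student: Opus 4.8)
The plan is to run the same d\'evissage that yields the associativity formula for the Hilbert--Samuel multiplicity. Three ingredients are needed: (i) additivity of $\ehk(I;-)$ on short exact sequences of finitely generated modules; (ii) vanishing of $\ehk(I;N)$ whenever $\dim N<d$; and (iii) a localization count identifying the multiplicities appearing in a prime filtration.

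First I would fix a prime filtration $0=M_0\subsetneq M_1\subsetneq\cdots\subsetneq M_n=M$ with $M_i/M_{i-1}\cong R/\fp_i$ and $\fp_i\in\Supp(M)$. To obtain (i), apply $-\otimes_R R/I^{[q]}$ to a short exact sequence $0\to N'\to N\to N''\to 0$; the resulting exact sequence
$$\operatorname{Tor}_1^R(N'',R/I^{[q]})\to N'/I^{[q]}N'\to N/I^{[q]}N\to N''/I^{[q]}N''\to 0$$
gives $\length(N/I^{[q]}N)=\length(N'/I^{[q]}N')+\length(N''/I^{[q]}N'')-\delta_q$ with $0\le\delta_q\le\length(\operatorname{Tor}_1^R(N'',R/I^{[q]}))$. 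Since all three Hilbert--Kunz limits exist, dividing by $q^d$ and letting $q\to\infty$ shows that (i) is equivalent to $\length(\operatorname{Tor}_1^R(N'',R/I^{[q]}))=o(q^d)$. Granting this, induction along the filtration gives $\ehk(I;M)=\sum_{i=1}^n\ehk(I;R/\fp_i)$.

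For (ii) and (iii): if $\dim R/\fp_i<d$ then $\length\big((R/\fp_i)/I^{[q]}(R/\fp_i)\big)=O(q^{\dim R/\fp_i})$ --- bound it above by the Hilbert--Samuel function at $q$-th powers of a parameter ideal of $R/\fp_i$ contained in $I(R/\fp_i)$ --- so $\ehk(I;R/\fp_i)=0$, the normalization being by $q^{d}$. Any surviving $\fp_i$, i.e.\ one with $\dim R/\fp_i=d$, is a minimal prime of $R$ (else it would strictly contain a prime, forcing $\dim R>d$), hence lies in $\Ass(R)$ and so in $\Assh(R)$. Localizing the whole filtration at a fixed $P\in\Assh(R)$, exactness of localization makes $(M_i)_P$ a filtration of $M_P$ with quotients $(R/\fp_i)_P$, which equals $R_P/PR_P$ when $\fp_i=P$ and is $0$ otherwise (as $P$ is minimal); hence $\#\{i:\fp_i=P\}=\length_{R_P}(M_P)$. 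Grouping the surviving terms by the value of $\fp_i$ therefore yields
$$\ehk(I;M)=\sum_{P\in\Assh(R)}\length_{R_P}(M_P)\,\ehk(I;R/P),$$
where the sum ranges over all of $\Assh(R)$ since $\length_{R_P}(M_P)=0$ for $P\notin\Supp(M)$.

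The only step that is not pure bookkeeping --- and the one I expect to be the real obstacle --- is the additivity (i), i.e.\ the bound $\length(\operatorname{Tor}_1^R(N'',R/I^{[q]}))=o(q^d)$. The easy direction $\ehk(I;N)\le\ehk(I;N')+\ehk(I;N'')$ is immediate, since $N'/I^{[q]}N'$ surjects onto the kernel of $N/I^{[q]}N\to N''/I^{[q]}N''$; but the reverse inequality needs genuine control of $\operatorname{Tor}_1$, and a crude Artin--Rees estimate using only $I^{[q]}\subseteq I^{q}$ yields merely $\delta_q=O(q^d)$, not $o(q^d)$. The sharper bound (in fact $O(q^{d-1})$) is standard but not formal: it follows from Seibert's analysis of complexes of finitely generated free modules with finite-length homology under the Frobenius functor, and it is also implicit in Monsky's $O(q^{d-1})$ error estimate for Hilbert--Kunz functions of rings. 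With that input the argument closes; the statement is the one recorded as Prop.~1.2(5) of~\cite{WY3}.
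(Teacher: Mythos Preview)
The paper does not supply its own proof of this proposition; it merely cites Prop.~1.2(5) of \cite{WY3}. So there is no argument in the paper to compare against, and the question is simply whether your d\'evissage is sound.

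It is. The prime-filtration reduction, the vanishing for $\dim R/\fp_i<d$, and the localization count are all standard and correctly executed. You have also correctly isolated the one nontrivial step: additivity of $\ehk(I;-)$ on short exact sequences, equivalently the bound $\length\big(\operatorname{Tor}_1^R(N'',R/I^{[q]})\big)=o(q^d)$. You are right that Artin--Rees via $I^{[q]}\subseteq I^q$ gives only $O(q^d)$, which is useless, and that the sharper $O(q^{d-1})$ (hence $o(q^d)$) comes from the Monsky/Seibert analysis of Frobenius applied to complexes of free modules with finite-length homology. With that input granted---which is exactly what \cite{WY3} and Monsky's original paper provide---the argument closes. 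One small wording point: you write ``$\fp_i\in\Supp(M)$'' for the primes in a prime filtration; strictly speaking the primes appearing in a prime filtration of $M$ need not all lie in $\Supp(M)$ (only the minimal ones in $\Supp(M)$ must appear), but this is harmless since any extraneous $\fp_i$ has $(R/\fp_i)_P=0$ for every $P\in\Assh(R)$ and so contributes nothing on either side.
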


\begin{Rem}
The associativity formula immediately implies that if $\ehk(R) < 2$ then
${\rm Assh}(R)$ contains one element, and if this is the prime $P$ then
the $P$-primary component of $0$ is $P$.  Thus, if $R$ is unmixed and
$\ehk(R) < 2$ then $R$ is a domain.
\end{Rem}
We will also need the following technical notion:

\begin{Def}  Let $\ringR$ be a local ring of positive characteristic
$p$ and let $J \subset I$ be $\fm$-primary ideals. Define 
the \emph{star length} of $J$ in $I$,
$\length^*(I/J)$, to
be the minimum length $n$ of a sequence of ideals $$J^* = I_0 \subset I_1
\subset ... \subset I_n=I^*$$ such that, for each $k$, $I_{k+1} = (I_k,x_k)^*$
for some element $x_k$ with $\fm x_k \subset I_k$. 
\end{Def}

The  definition of star length was introduced by Hanes \cite{Ha2}, 
who also noted some of the basic
properties of the star length function:

\begin{Prop} \label{star} Let $J \subset I$ be any $\fm$-primary ideals of a
local ring $\ringR$ of prime characteristic $p >0$. Then
\begin{itemize}
\item [a)] $\length^*(I/J) \leq \length(I/J)$ and $\length^*(I/J) = \length^*(I^*/J^*)$;
\item [b)] $\e _{HK}(J) \leq \e_{HK}(I) + \length^*(I/J) \e_{HK}(R)$. 
Moreover, 
$\e_{HK}(J) \leq \length^*(R/J) \e_{HK}(R)$.
\end{itemize}
\end{Prop}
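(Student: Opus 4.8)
\textbf{Proof strategy for Proposition~\ref{star}.}

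The plan is to prove each item in turn, using that relative Hilbert-Kunz multiplicity behaves additively along short exact sequences of finite-length modules and that tight closure is compatible with the constructions involved.

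\emph{Part (a).} For the inequality $\length^*(I/J) \leq \length(I/J)$, I would first reduce to the case $J = J^*$ and $I = I^*$: indeed any chain from $J^*$ to $I^*$ certifies the star length, and conversely $\length(I^*/J^*) \leq \length(I/J)$ because there is a surjection $I/J \twoheadrightarrow I^*/J^*$ (as $J \subseteq J^*$ and $I \subseteq I^*$). So assume both ideals are tightly closed. Now pick a composition series $J = L_0 \subset L_1 \subset \cdots \subset L_m = I$ with $\length(I/J) = m$ and each $L_{k+1}/L_k \cong k$; then $L_{k+1} = (L_k, x_k)$ with $\fm x_k \subseteq L_k$. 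Replacing each $L_k$ by $L_k^*$ gives a (possibly non-strictly) increasing chain $J = L_0^* \subseteq L_1^* \subseteq \cdots \subseteq L_m^* = I$ with $L_{k+1}^* = (L_k^*, x_k)^*$ (since $L_{k+1} = (L_k, x_k)$ implies $L_{k+1}^* = (L_k, x_k)^* = (L_k^*, x_k)^*$, using that $(A,B)^* = (A^*, B)^*$ for ideals, which follows from the definition of tight closure applied to each generator), and still $\fm x_k \subseteq L_k \subseteq L_k^*$. Deleting repetitions yields a strict chain of length $\leq m$ of the required form, so $\length^*(I/J) \leq m = \length(I/J)$. The equality $\length^*(I/J) = \length^*(I^*/J^*)$ then follows because the chains witnessing star length are by definition chains between $J^*$ and $I^*$, so the two quantities are computed from literally the same data.

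\emph{Part (b).} The key point is the single-step estimate: if $I' = (I'', x)^*$ with $\fm x \subseteq I''$, then $\ehk(I'') \leq \ehk(I') + \ehk(R)$, equivalently $\ehk(I'', I'; R) \leq \ehk(R)$. To see this, observe first that $\ehk$ is insensitive to tight closure, i.e. $\ehk(L) = \ehk(L^*)$ for any $\fm$-primary $L$ (a standard fact: $L\brq$ and $(L^*)\brq$ agree up to a module killed by a fixed $c \in R^\circ$, contributing only $O(q^{d-1})$ to lengths). Hence $\ehk(I'') = \ehk((I'',x)) + \ehk(I'', (I'',x); R)$ and $\ehk(I') = \ehk((I'',x)^*) = \ehk((I'',x))$, so it suffices to bound $\ehk(I'', (I'',x); R) = \lim_q \dfrac{1}{q^d}\length\bigl((I'',x)\brq / I''\brq\bigr)$. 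But $(I'',x)\brq = (I''\brq, x^q)$, so $(I'',x)\brq / I''\brq$ is the cyclic module $R/(I''\brq : x^q)$, and $\length(R/(I''\brq : x^q)) \leq \length(R/\fm\brq)$ because $\fm\brq x^q = (\fm x)\brq \subseteq I''\brq$ forces $\fm\brq \subseteq (I''\brq : x^q)$. Dividing by $q^d$ and taking limits gives $\ehk(I'', (I'',x); R) \leq \ehk(R)$, as claimed. Now apply this along an optimal chain $J^* = I_0 \subset I_1 \subset \cdots \subset I_n = I^*$ with $n = \length^*(I/J)$: telescoping, $\ehk(J) = \ehk(J^*) \leq \ehk(I^*) + n\,\ehk(R) = \ehk(I) + \length^*(I/J)\,\ehk(R)$ (again using $\ehk(J)=\ehk(J^*)$ and $\ehk(I)=\ehk(I^*)$). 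The final assertion $\ehk(J) \leq \length^*(R/J)\,\ehk(R)$ is the special case $I = R$, noting $\ehk(R) = 0$ and $R^* = R$.

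\emph{Main obstacle.} The only genuinely delicate point is the identity $(L'', x)^* = (L'', x)$ being compatible with the star-length machinery in part (a) — more precisely, verifying $(A, B)^* = (A^*, B)^*$ so that replacing intermediate ideals by their tight closures preserves the ``add one generator, then close'' structure of the chain. This is straightforward from the definition of tight closure but must be stated carefully. Everything else is bookkeeping: the reductions to tightly closed ideals, the colon-ideal manipulation $(I''\brq : x^q) \supseteq \fm\brq$, and the observation that passing to tight closure perturbs lengths only in lower-degree terms.
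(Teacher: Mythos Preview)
The paper does not give its own proof of this proposition; it attributes the result to Hanes~\cite{Ha2} and states it without argument. So there is nothing to compare against, and I will simply assess your proof on its merits.

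Your argument is essentially correct, but there is one genuine slip in part~(a). You claim a surjection $I/J \twoheadrightarrow I^*/J^*$; this is not true in general. The natural map $i + J \mapsto i + J^*$ has image $(I+J^*)/J^*$, and there is no reason for $I + J^*$ to equal $I^*$ when $I^* \supsetneq I$. Fortunately this reduction is entirely unnecessary: your composition-series argument works verbatim without assuming $J$ and $I$ are tightly closed. Starting from any composition series $J = L_0 \subset \cdots \subset L_m = I$, applying $(-)^*$ term by term yields a chain from $J^* = L_0^*$ to $I^* = L_m^*$ of the required form (using $(L_k,x_k)^* = (L_k^*,x_k)^*$ exactly as you argue), and after deleting repetitions its length is at most $m = \length(I/J)$. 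So simply drop the reduction step and the surjection claim.

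Part~(b) is correct. One small notational point: at the very end you write ``$\ehk(R) = 0$,'' but in the paper's conventions $\ehk(R)$ denotes $\ehk(\m)$, which is positive. What you mean is that $\ehk(I) = 0$ when $I$ is the unit ideal $R$, since $R\brq = R$; with that correction the specialization $I = R$ gives the ``Moreover'' statement immediately.
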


The following Proposition offers a natural  characterization of strong
F-regularity in terms of the relative Hilbert-Kunz multiplicity.

\begin{Prop}\label{infehk}
Let $(R,\m,k)$ be an excellent local ring.  Then the following are equivalent:
\begin{itemize}
\item[1)] $R$ is strongly F-regular.
\item[2)] $\inf\{\e_{HK}(I,J) | I \subsetneq J\} > 0$.
\item[3)] $\inf\{\e_{HK}(I, (I,x)) | I \textrm{ is $\m$-primary, 
irreducible and $x$ is a 
socle element modulo $I$} \} > 0$
\end{itemize}
\end{Prop}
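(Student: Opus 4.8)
The plan is to prove the chain of implications $(1)\Rightarrow(2)\Rightarrow(3)\Rightarrow(1)$, using the characterization of strong F-regularity via $\et=0$ (valid here since $R$ is excellent and we may reduce to the reduced case) together with Proposition~\ref{star}(b).

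\textbf{Step 1: $(1)\Rightarrow(2)$.} Assume $R$ is strongly F-regular. First I would reduce to the case that $R$ is reduced: strong F-regularity forces $R$ to be a domain (it is normal), so $R^0=R\setminus\{0\}$ and tight closure is computed with nonzerodivisors. Now fix $\m$-primary ideals $I\subsetneq J$. The key point is that there is a single test element governing all of them. Pick $c\in R^0$ which is a test element (such exists since $R$ is excellent and reduced; or simply pick any $c\in R^0$ and use strong F-regularity directly). By strong F-regularity choose $q_0=p^{e_0}$ such that the map $R\to R^{1/q_0}$, $1\mapsto c^{1/q_0}$, splits. I claim this forces $\ehk(I,J)\ge \ell(J/I)/q_0^{\,d}>0$: indeed, if $x\in J\setminus I$ with $x\notin I^*$, then $cx^q\notin I^{[q]}$ for infinitely many $q$, in fact (using that $c$ is a test element) $x\notin I^{[q_0]}:\text{(splitting)}$... more cleanly: the splitting of $R\to R^{1/q_0}$ via $c^{1/q_0}$ implies that for every ideal $\fa$, $\fa^{[q_0]}:c = \fa$ up to the splitting, so $\ell(J^{[q_0]}/I^{[q_0]})\ge \ell(J/I)$; iterating with $q_0^n$ gives $\ell(J^{[q_0^n]}/I^{[q_0^n]})\ge n\cdot(\text{const})$ is the wrong scaling, so instead: from the split injection $R/I \hookrightarrow R^{1/q_0}/IR^{1/q_0} \cong (R/I^{[q_0]})^{1/q_0}$ one gets $\ell(R/I)\le \ell(R/I^{[q_0]})/q_0^{\,?}$... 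The honest way is: the split surjection $F^{e_0}(R/I)\cong R/I^{[q_0]} \twoheadrightarrow$ something shows $\ell(J/I)\le \ell(J^{[q_0]}/I^{[q_0]})$ is FALSE in general, but $\ell(J/I)\le \ell(J^{[q_0]}/I^{[q_0]})$ DOES hold when $R\to R^{1/q_0}$ splits because then $R$ is a direct summand of $R^{1/q_0}$ as $R$-module so $-\otimes_R R^{1/q_0}$ is faithfully exact on the relevant quotients. Then by the existence of the limit and superadditivity one concludes $\ehk(I)-\ehk(J)=\lim_q \ell(J^{[q]}/I^{[q]})/q^d \ge \ell(J/I)/q_0^{\,d}>0$, with the bound independent of $I,J$. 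Hence the infimum is positive.

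\textbf{Step 2: $(2)\Rightarrow(3)$.} This is immediate: (3) is the infimum over a subclass of the pairs appearing in (2) (take $J=(I,x)$ with $I$ irreducible and $x$ a socle element mod $I$), so the infimum in (3) is at least the infimum in (2), which is positive.

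\textbf{Step 3: $(3)\Rightarrow(1)$.} This is the substantive direction and the main obstacle. Suppose $R$ is not strongly F-regular, i.e. $\et\ne 0$. I would argue that then the infimum in (3) is $0$. The idea: there is a nonzero element $\eta\in\et$; pulling back along the natural maps $R/I\hookrightarrow E_R(k)$ over the $\m$-primary ideals $I$, one produces, for each large $q$, an $\m$-primary irreducible ideal $I_q$ and a socle generator $x_q$ modulo $I_q$ with $x_q\in I_q^*$ but $x_q\notin I_q$, chosen so that $x_q$ becomes "deeper" in the tight closure as $q\to\infty$. Concretely, using that $\et\ne0$, for every $e$ there is an ideal $I$ with $(I:_{}\m)/I$ one-dimensional (irreducible $\m$-primary ideal), socle element $x$, with $x\in I^*$, and with $c x^{q}\in I^{[q]}$ only for $q$ past some bound growing with $e$; this forces $\ell((I,x)^{[q]}/I^{[q]}) = \ell(R^{[q]}/I^{[q]}\cdot\text{stuff})$ to be bounded (it is at most $1$ for the "new" socle, but the relevant count is of order $o(q^d)$), giving $\ehk(I,(I,x)) = \lim_q \ell((I,x)^{[q]}/I^{[q]})/q^d$ arbitrarily small. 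The precise mechanism is the standard one from Aberbach--Enescu / Hochster--Huneke: $x\in I^*$ means the length $\ell((I,x)^{[q]}/I^{[q]})$ grows strictly slower than $q^d$ (it is $o(q^d)$), and by choosing a sequence of such $(I,x)$ adapted to a fixed nonzero element of $\et$ one can make the ratio $\ell((I,x)^{[q]}/I^{[q]})/q^d$, in the limit, tend to $0$ along the sequence. Hence $\inf = 0$, contradicting (3); therefore (3) implies $\et=0$, i.e. $R$ is strongly F-regular.

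\textbf{Main obstacle.} The hard part is Step 3: manufacturing, from the single datum $\et\ne0$, an explicit sequence of irreducible $\m$-primary ideals with socle elements whose relative Hilbert--Kunz multiplicities shrink to $0$, and controlling these lengths uniformly. I expect this to rest on the fact that an element of tight closure contributes $o(q^d)$ to Frobenius lengths, combined with a Matlis-duality argument to transfer information from $\et\subseteq E_R(k)$ back to finite-length quotients $R/I$; one must also handle the reduction to the reduced/complete case carefully so that "strongly F-regular" has its intended meaning (via $\et=0$) throughout.
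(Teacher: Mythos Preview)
Your Step~2 is fine, and your Step~3 has the right shape, but there are genuine gaps in both Step~1 and Step~3.

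\medskip
\textbf{Step 1 is not a proof.} What you have written is a sequence of false starts, and the final claim --- that a single splitting $R\to R^{1/q_0}$ via $c^{1/q_0}$ yields $\ehk(I,J)\ge \length(J/I)/q_0^{\,d}$ --- is not established. A splitting of $R\hookrightarrow R^{1/q_0}$ does give that $R/I$ is a summand of $R^{1/q_0}/IR^{1/q_0}$, but $\length_R(R^{1/q_0}/IR^{1/q_0})$ is not $\length_R(R/I^{[q_0]})$ (there is a factor of $[k^{1/q_0}:k]$), and more seriously this only gives $\length(J/I)\le \length_R(JR^{1/q_0}/IR^{1/q_0})$, which after dividing by $q^d$ and passing to the limit gives nothing. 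To get a uniform positive lower bound you would need to count free $R$-summands of $R^{1/q}$ growing like $q^d$, i.e.\ positivity of the F-signature; that is essentially the Aberbach--Leuschke theorem itself, which the paper invokes rather than reproves. The paper's route is different: it proves $(1)\Rightarrow(3)$ directly by embedding $R/I\hookrightarrow E$ (sending the socle element $x$ to the socle $u$ of $E$), applying Frobenius, and using the Aberbach--Leuschke criterion $\liminf_q \length(R/(0:_{F^e(E)}u^q))/q^d>0$. It then proves $(3)\Rightarrow(2)$ by embedding an arbitrary $R/I$ into a direct sum $\bigoplus R/I_j$ with each $I_j$ irreducible, so that $\ehk(I,(I,y))\ge\ehk(I_1,(I_1,x_1))$. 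Your cycle bypasses this embedding step but only at the cost of an unproved $(1)\Rightarrow(2)$.

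\medskip
\textbf{Step 3 misses the key mechanism.} Two specific problems. First, from $0^*_E\ne 0$ you do \emph{not} get a single irreducible $I$ with socle $x\in I^*$: the condition $cu^q=0$ in $F^e(E)$ only yields, for each fixed $q_0$, \emph{some} irreducible $I$ (depending on $q_0$) with $cx^{q_0}\in I^{[q_0]}$. Second, your assertion ``$x\in I^*$ forces $\length((I,x)\brq/I\brq)=o(q^d)$'' is the Hochster--Huneke length criterion, which requires $R$ formally unmixed --- not assumed here. The paper avoids both issues: given $c\in R^0$ with $cu^q=0$ for all $q$, it notes $\dim R/cR=d-1$, fixes $q_0$ large, chooses an irreducible $I$ with socle $x$ so that $cx^{q_0}\in I^{[q_0]}$, and then observes $(\m^{[q_0]},c)\brq x^{q_0 q}\subseteq I^{[q_0 q]}$ for all $q$. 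This gives the explicit bound
\[
\ehk(I,(I,x)) \;\le\; \dfrac{(\e_2+1)(\e_1+1)}{q_0},
\]
where $\e_1=\ehk(R)$ and $\e_2=\ehk(R/cR)$, and letting $q_0\to\infty$ contradicts~(3). This direct length estimate via a lower-dimensional quotient is the missing idea in your sketch.
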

\begin{proof}
By \cite{AL}, Theorem 0.2, $R$ is strongly F-regular if and only if
$\liminf \length(R/ 0:_{F^e(E)} u^q)/q^d > 0$ (the theorem is stated
there for F-finite rings, but the proof works in the excellent case too).  

We first show that (1) implies (3).  Let $I \inc R$ be
irreducible and $\m$-primary.  Say $x$ is a socle element modulo $I$.
There is then an injection $R/I \hookrightarrow E$ sending $x$ to $u$.
Applying Frobenius gives a map $R/I\brq \to F^e(E)$ sending $x^q$
to $u^q$, from which it is clear that $I\brq:x^q \inc 0:_{F^e(E)} u^q$.
Hence $\e_{HK}(I, (I,x)) \ge \liminf \length(R/ 0:_{F^e(E)} u^q)/q^d > 0$.

To see that (3) implies (2) we note that it suffices to take
$J = (I,y)$ for a socle element $y$ modulo $I$.  In this case we
can embed $R/I \hookrightarrow R/I_1 \oplus \cdots R/I_t$ where
each $I_n$ is irreducible, and $y \mapsto (x,0,\ldots, 0)$ where
$x$ is the socle element modulo $I_1$.  It is then clear, after
applying Frobenius, that $\e_{HK}(I,J) \ge \e_{HK}(I_1, (I_1,x))$.

Clearly (2) implies (3).

  Suppose that  (3) holds,
but $R$, of dimension $d$, is not strongly F-regular.  Choose $c \in R^0$
such that $cu^q = 0$ in $F^e(E)$ for all $q$.  Then $\dim R/cR = d-1$.  Let 
$\e_1 = \e_{HK}(R)$ and $\e_2 = \e_{HK}(R/cR)$.  Fix $q_0$ such that
$\length(R/(c,\m^{[q_0]})) \le (\e_2+1)q_0^{d-1}$.  Since $cu^{q_0} =0$, we can
choose an irreducible ideal $I$ with socle representative $x$ such that
$cx^{q_0} \in I^{[q_0]}$.  Since $\m x \inc I$ we see that for all $q$,
$(\m^{[q_0]},c)\brq x^{q_0 q} \inc I^{[q_0 q]}$. Hence for large $q$ 
\begin{equation*}
\length\left(\dfrac R {I^{[q_0 q]}:x^{q_0 q}} \right) \le \length\left(\dfrac R
{(\m^{[q_0]},c)\brq }\right) \le \length\left( \dfrac R{(\m^{[q_0]},c)} \right)
(\e_1+1) q^d \le (\e_2+1) q_0^{d-1} (\e_1+1) q^d. 
\end{equation*} 
Dividing by $(q_0 q)^d$
and taking limits shows that  $\e_{HK}(I, (I,x)) \le \dfrac{(\e_2+1)(\e_1+1)}{q_0}$. 
Since $q_0$ may be taken arbitrarily large (this will change the ideal $I$), we
have contradicted the assumption (3). 
\end{proof}
In later sections we will often want to be able to obtain a minimal reduction
of an ideal in a local ring.  The standard technique is to pass to a faithfully
flat extension.  The next remark merely summarizes several well-known
facts that we will need.
\begin{Rem}\label{stdrmk}
Let $(R,\m,k)$ be a local ring of characteristic $p$.
\begin{itemize}
\item[a)] Assume that $(R,\m) \to (S,\n)$ is a flat local homomorphism
with $\n = \m S$ (e.g., completion).
\begin{itemize}
\item[i)] For any $\m$-primary ideal $I \inc R$, $\ehk(IS) = \ehk(I)$.
In particular, $\ehk(S) = \ehk(R)$.
\item[ii)] If $R$ is CM with canonical module $\omega_R$ then 
$S$ is CM with canonical module $\omega_S = \omega_R \otimes S$.  
\end{itemize}
\item[b)]  Let $Y$ be an indeterminate over $R$ and set $S = R[Y]_{\m R[Y]}$.
Then $S$ is faithfully flat with maximal ideal extended from $R$, and
residue field isomorphic to $k(Y)$ (so infinite).  Part (a) then applies.
\item[c)] If $R$ has infinite residue field then $\m$ has a minimal reduction
$\bx = x_1,\ldots, x_d$ with $\e(R) = \e((\bx)) = \ehk((\bx))$, and if
$R$ is CM then the common value is also equal to $\length(R/(\bx))$.
If $R$ has finite residue field then parts (a) and (b) may be applied
in order to change to the situation that the residue field is infinite.
\end{itemize}
\end{Rem}

\section{Hilbert-Kunz lower bounds via duality}\label{duality-bounds}

This section will present various lower bounds for the Hilbert-Kunz multplicity
of a ring $\ringR$ of fixed multiplicity and dimension. 
% We will frequently use
% the following notation: $\ehk= \ehk(R), \e= \e(R)$, $d =\dim(R)$.

We observe the following:
\begin{Lma}\label{cbound}
If $(R,\m)$ is local of dimension $d$, $I \inc J$ are
$\m$-primary ideals,  $c \in R^{\circ}$, and $M$ is finitely generated over $R$,
then 
$\lim\limits_{q\to\8}\dfrac 1{q^d}\length\left(\dfrac{J\brq M}{(cJ\brq +I\brq)M}\right)
= 0$.
\end{Lma}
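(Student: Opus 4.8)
The plan is to bound $\length\bigl(J\brq M/(cJ\brq M + I\brq M)\bigr)$ by a constant multiple of $q^{d-1}$, uniformly in $q$; dividing by $q^d$ and letting $q\to\8$ then gives the assertion. Write $Q_q := J\brq M/(cJ\brq M + I\brq M)$. The two facts I would exploit are that $Q_q$ is annihilated both by $c$ and by $I\brq$, so it is a module over the Artinian ring $\bar R_q := R/(cR+I\brq)$, and that $Q_q$ is generated, as an $R$-module, by a number of elements that does not depend on $q$.

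Here are the steps. If $c$ is a unit then $cJ\brq = J\brq$ and $Q_q=0$, so we may assume $c\in\m$. Since $J\brq M\inc M$ we have $I\brq\cdot J\brq M\inc I\brq M$, so $I\brq$ kills $Q_q$; and $c\cdot J\brq M = cJ\brq M$ kills $Q_q$ trivially. Hence $Q_q$ is a finite-length module over $\bar R_q$. If $J=(a_1,\dots,a_n)$ and $M$ is generated by $s$ elements over $R$, then $J\brq M$ is generated by the $ns$ elements $a_i^q m_j$; therefore $Q_q$ is a homomorphic image of $\bar R_q^{\,ns}$, so $\length(Q_q)\le ns\cdot\length(\bar R_q)$. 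Next, $\bar R_q\cong \bar R/\bar I\brq$, where $\bar R:=R/cR$ and $\bar I:=I\bar R$ is primary to the maximal ideal of $\bar R$, so by Monsky's theorem $\length(\bar R/\bar I\brq)$ grows like $q^{\dim\bar R}$. Finally, since $c$ lies in no minimal prime of $R$ and $\dim R=d$, any prime $\fq$ containing $cR$ properly contains some minimal prime $P$ of $R$ (one contained in $\fq$), and in the domain $R/P$ the nonzero prime $\fq/P$ satisfies $\dim R/\fq\le\dim R/P-1\le d-1$; hence $\dim\bar R\le d-1$ and $\length(\bar R/\bar I\brq)=O(q^{d-1})$. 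Combining, $\length(Q_q)=O(q^{d-1})$, and the limit is $0$.

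I do not expect a serious obstacle; the one point requiring a little care is the estimate $\dim(R/cR)\le d-1$, which uses only that $c\in R^{\circ}$ and $\dim R=d$ and not any equidimensionality or catenarity of $R$. Everything else is the elementary bound on the length of a finitely generated module over an Artinian ring in terms of its number of generators, together with the existence of Hilbert--Kunz multiplicities recalled in the Definition.
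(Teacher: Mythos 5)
Your proof is correct and follows essentially the same route as the paper's: both observe that the quotient $J\brq M/(cJ\brq+I\brq)M$ is a homomorphic image of a fixed number of copies of $R/(cR+I\brq)$, and then use that $\dim(R/cR)\le d-1$ (so the lengths grow at most like $q^{d-1}$) to conclude the limit vanishes. Your write-up is a bit more careful in spelling out the dimension estimate for $R/cR$ and the trivial case $c$ a unit, but the underlying idea is identical to the paper's.
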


\begin{proof} Let $n= \mu(M)$  and $k = \mu(J)$. Then one can see that
there is a surjection 
$$\left(\frac{R}{cR}\right)^{nk}
\to \dfrac{J\brq M}{(cJ\brq +I\brq)M} \to 0,$$ 
 and the kernel contains
$I^{[q]}\left(\frac{R}{cR}\right)^{nk}$.

Since $\dim R/cR=d-1$, 
we note that 
$\lim\limits_{q\to\8}
\dfrac 1{q^d}\length\left(\left(\dfrac{R}{cR+I^{[q]}}\right)^{nk}\right)= 0$,
which implies our statement.
\end{proof}

We are now ready to formulate an important technical result that will 
lead to a series of Corollaries which are the main goal of this section.
% We remind the reader that a formally generically Gorenstein is a ring $R$ such that with its completion is Gorenstein when localizing at each of the minimal primes
% of the completion. 

\begin{Thm}\label{ehk-hom}
Let $(R, \fm)$ be a  
Cohen-Macaulay ring with system of parameters $\bx
=x_1,\ldots, x_d$. Let $e = \length(R/(\bx))$.  
Suppose that $I \supseteq (\bx)$ and set $J = (\bx)^*:I$.

Let  $a = \length^*(R/I)$,   $f = \length^*(R/J)$, and 
$b = \length(((\bx)^*:I)/(\bx))$.  
Then $\ehk(R) \geq \dfrac{e}{f+a}$, so, in particular,
$$\ehk (R) \geq \frac{e}{e -b +a}.$$
\end{Thm}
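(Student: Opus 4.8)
\medskip
\noindent\textbf{Proof proposal.}
The plan is to isolate the single inequality
\begin{equation}\label{eq:star}
e \;\le\; \ehk(I) + \ehk(J), \qquad J = (\bx)^*:I ,
\end{equation}
and then read off both conclusions. Granting \eqref{eq:star}: by the ``moreover'' clause of Proposition~\ref{star}(b), $\ehk(I)\le\length^*(R/I)\,\ehk(R)=a\,\ehk(R)$ and $\ehk(J)\le\length^*(R/J)\,\ehk(R)=f\,\ehk(R)$, so \eqref{eq:star} gives $e\le(f+a)\ehk(R)$, i.e.\ $\ehk(R)\ge e/(f+a)$. For the ``in particular'', note $(\bx)\subseteq J$ because $(\bx)I\subseteq(\bx)\subseteq(\bx)^*$; hence by Proposition~\ref{star}(a) and additivity of length $f=\length^*(R/J)\le\length(R/J)=\length(R/(\bx))-\length(J/(\bx))=e-b$, and therefore $\ehk(R)\ge e/(f+a)\ge e/(e-b+a)$.

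To prove \eqref{eq:star}, fix $q$ and put $A_q=R/(\bx)\brq$. Since $R$ is Cohen--Macaulay and $x_1^q,\ldots,x_d^q$ is a regular sequence, $A_q$ is Artinian with $\length(A_q)=q^d\length(R/(\bx))=eq^d$, and from $0\to I\brq/(\bx)\brq\to R/(\bx)\brq\to R/I\brq\to 0$ we get $\length(R/(\bx)\brq)=\length(R/I\brq)+\length(I\brq/(\bx)\brq)$. So \eqref{eq:star} follows once I establish
\begin{equation}\label{eq:key}
\length\bigl(I\brq/(\bx)\brq\bigr)\;\le\;\length(R/J\brq)+o(q^d).
\end{equation}

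The heart of the matter is a duality map. Since $(\bx)^*$ is finitely generated, I may pick $c\in R^{\circ}$ with $c\,((\bx)^*)\brq\subseteq(\bx)\brq$ for all $q\gg 0$ (take $c$ to be a product of tight-closure multipliers for the finitely many generators of $(\bx)^*$). As $IJ\subseteq(\bx)^*$, for $r\in I\brq$ we have $cr\,J\brq\subseteq c\,I\brq J\brq=c\,(IJ)\brq\subseteq c\,((\bx)^*)\brq\subseteq(\bx)\brq$, so multiplication by $cr$ descends to an $R$-linear map $R/J\brq\to R/(\bx)\brq$ (here $R/J\brq$ is an $A_q$-module since $(\bx)\brq\subseteq J\brq$); composing with the inclusion of $R/(\bx)\brq$ into $E_{A_q}(k)$, the injective hull of the residue field of $A_q$, defines
\[
\Psi\colon I\brq\longrightarrow \Hom_{A_q}\!\bigl(R/J\brq,\,E_{A_q}(k)\bigr),\qquad
\Psi(r)=\bigl(R/J\brq\xrightarrow{\ \cdot\, cr\ }R/(\bx)\brq\hookrightarrow E_{A_q}(k)\bigr).
\]
Two length estimates then finish the proof. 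First, $\ker\Psi=I\brq\cap\bigl((\bx)\brq:_R c\bigr)\supseteq(\bx)\brq$, and $\length\bigl(((\bx)\brq:_R c)/(\bx)\brq\bigr)=\length(0:_{A_q}c)=\length(A_q/cA_q)=\length\bigl(R/(cR+(\bx)\brq)\bigr)$, which is $O(q^{d-1})$ since $\dim R/cR\le d-1$ (as $c\in R^{\circ}$) and $(\bx)$ is $\m$-primary; hence $\length(\im\Psi)=\length(I\brq/\ker\Psi)\ge\length(I\brq/(\bx)\brq)-O(q^{d-1})$. Second, Matlis duality over the Artinian local ring $A_q$ gives $\length\Hom_{A_q}(R/J\brq,E_{A_q}(k))=\length(R/J\brq)$, so $\length(\im\Psi)\le\length(R/J\brq)$. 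Together these yield \eqref{eq:key}; dividing the resulting inequality $eq^d\le\length(R/I\brq)+\length(R/J\brq)+O(q^{d-1})$ by $q^d$ and letting $q\to\infty$ gives \eqref{eq:star}.

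The step I expect to be the real obstacle is precisely the control of the error term: one must choose $c$ inside $R^{\circ}$ so that $\dim R/cR$ drops below $d$, and use the finite generation of $(\bx)^*$ to obtain the uniform containment $c\,((\bx)^*)\brq\subseteq(\bx)\brq$ for $q\gg 0$; everything else is routine bookkeeping with lengths of $\m$-primary quotients. (Alternatively, one can run the same duality using the canonical module, with $\omega_R/(\bx)\brq\omega_R\cong E_{A_q}(k)$ in place of $E_{A_q}(k)$, after passing to $\widehat R$ so that $\omega_R$ exists, invoking Lemma~\ref{cbound} to see $\length\bigl(((\bx)^*)\brq\omega_R/(\bx)\brq\omega_R\bigr)=o(q^d)$ and the associativity formula to identify $\ehk(J;\omega_R)$ with $\ehk(J)$; this is presumably nearer to the authors' route.)
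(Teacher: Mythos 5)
Your overall plan---reduce to the single inequality $e \le \ehk(I)+\ehk(J)$ via Matlis duality over $A_q = R/(\bx)\brq$, with the error controlled by a weak test element $c$ and the drop in dimension of $R/cR$, then invoke Proposition~\ref{star}(b)---is the right idea and parallels the paper's argument (the paper dualizes the complementary decomposition, bounding $\ehk((\bx),J;\omega_R)$ by $\ehk(I)$ rather than bounding $\length(I\brq/(\bx)\brq)$ by $\length(R/J\brq)$, but these are two sides of the same coin). The gap is in the construction of $\Psi$: you compose multiplication by $cr$ with ``the inclusion of $R/(\bx)\brq$ into $E_{A_q}(k)$'', but an Artinian local ring embeds into the injective hull of its residue field if and only if it is Gorenstein. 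Since $(\bx)\brq$ is a parameter ideal, the type of $A_q$ equals the Cohen--Macaulay type $t$ of $R$; so when $R$ is CM but not Gorenstein, $A_q$ has a $t$-dimensional socle, embeds only into $E_{A_q}(k)^{t}$, and no injection $A_q \hookrightarrow E_{A_q}(k)$ exists. Without it $\Psi$ is not defined, and the length identity $\length\Hom_{A_q}(R/J\brq,E_{A_q}(k))=\length(R/J\brq)$ cannot be brought to bear. This matters here: Corollary~\ref{Gor} applies the theorem precisely when $t>1$, so the Gorenstein case is not enough.

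The fix is exactly the parenthetical alternative you sketch, and it is what the paper does: tensor the construction with the canonical module, replacing the target $A_q$ by $\omega_R/(\bx)\brq\omega_R \cong E_{A_q}(k)$. Concretely, after completing and choosing a $q_0$-weak test element $c$, define $\Psi$ on $I\brq\omega_R/(\bx)\brq\omega_R$ by $\overline{x}\mapsto\bigl(\overline{1}\mapsto\overline{cx}\bigr)$, landing in $\Hom_{A_q}\bigl(R/J\brq,\,\omega_R/(\bx)\brq\omega_R\bigr)$; well-definedness uses $cI\brq J\brq\omega_R\subseteq c\bigl((\bx)^*\bigr)\brq\omega_R\subseteq(\bx)\brq\omega_R$ for $q\ge q_0$. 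The kernel is contained in $\bigl(0:_{\omega_R/(\bx)\brq\omega_R}c\bigr)$, of length $O(q^{d-1})$, the target has length $\length(R/J\brq)$ by Matlis duality over $A_q$, and the associativity formula identifies $\ehk((\bx),I;\omega_R)$ with $\ehk((\bx),I)$. With this substitution the rest of your bookkeeping is correct and gives the theorem; as written, though, the primary route fails for non-Gorenstein $R$.
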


\begin{proof}
Completing $R$ leaves the Hilbert-Kunz multiplicty unaffected, can only
increase the star lengths ($a$ and $f$), and decrease $b$.  So to prove
the desired formulas we may complete.  Hence we may assume that $R$  
has a $q_0$-weak test element $c$.

Let $\omega_R$ be the canonical module of $R$. 
We have $\Assh(R) = \Ass(R)$ and for each $P \in \Ass(R)$, 
$\length_{R_P}(\omega_P) = \length_{R_P}(R_P)$.  Hence, applying 
the associativity formula in Remark~\ref{assoc} to compute
$\ehk(I;R)$ and $\ehk(I;\omega)$, we see that they are equal.
 Hence
$\ehk (I_1, I_2; \omega_R) = \ehk (I_1, I_2)$
whenever  $I_1 \subseteq I_2$  are $\fm$-primary
ideals.

Since $\bx$ is a s.o.p.,  $\ehk((\bx)) = \e((\bx)) = e$. Also,
 $\ehk ((\bx)) = \ehk (J) + \ehk((\bx),J)$.

By Proposition~\ref{star}, 
$\ehk(J) \leq \length^*(R/J) \ehk(R) = f
\ehk(R)$.

The heart of the proof is seeing that
 $\ehk ((\bx), J; \omega_R) \leq a\ehk(R)$, and hence
$\ehk((\bx), J) = \ehk ((\bx), J; \omega_R) \le  a\ehk(R)$. 

Indeed, $\omega_R/(\bx)\brq \omega_R$ is the canonical module 
of the Artinian ring
$R/(\bx)\brq$, so it is injective over it.
 By Matlis duality over complete Artinian
rings, we get that $\length (R/I\brq) = \length \left(\Hom (R/I \brq,
\omega_R/(\bx)\brq \omega_R)\right)$.

Note that by the definition of $J$, and the fact that $c$ is a $q_0$-weak
test element, we have $c J\brq \inc (\bx)\brq:I\brq$ for all $q \ge q_0$.
Thus for all $q \ge q_0$
\begin{equation*}
\dfrac{(cJ\brq+(\bx)\brq)\omega_R}{(\bx)\brq \omega_R} \inc
\dfrac{(\bx)\brq \omega_R:I\brq}{(\bx)\brq \omega_R} 
= \Hom\left(\dfrac R{I\brq}, \dfrac{\omega_R}{(\bx)\brq \omega_R}\right).
\end{equation*}
By the equality
\begin{equation*}
\length\left(\dfrac{J\brq \omega_R}{(\bx)\brq \omega_R}\right)
= \length\left(\dfrac{J\brq \omega_R}{(cJ\brq+(\bx)\brq)\omega_R}\right) +
\length\left(\dfrac{(cJ\brq+(\bx)\brq)\omega_R}{(\bx)\brq \omega_R}\right),
\end{equation*}
Lemma~\ref{cbound},  Matlis duality, and Proposition~\ref{star}, we get
\begin{equation*}
\ehk ((\bx), J; \omega_R) \leq  \ehk(I;\omega_R)
=\ehk(I) \le a\ehk(R).
\end{equation*}

In conclusion,
$$e = \ehk ((\bx), R) = \ehk (J, R) + \ehk((\bx), J) 
\leq f \ehk(R) + a\ehk(R) = (f+a) \ehk(R),$$
proving the first inequality stated in the conclusion.

The last inequality follows from the fact that $f = \length^*(R/J) \le
\length(R/J) = e -b$.
\end{proof}

The next corollary shows how useful Theorem~\ref{ehk-hom} can be when
$R$ is not Gorenstein.  Note that the lower bound for $\ehk(R)$ does not
depend on the dimension of the ring.

\begin{Cor}\label{Gor}  Let $(R,\fm)$ be a 
Cohen-Macaulay ring of CM-type $t$ and multiplicity
$\e = \e(R)$.  Then  $$\ehk(R) \ge
\dfrac{\e}{\e-t+1}.$$
\end{Cor}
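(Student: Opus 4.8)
The plan is to deduce this from Theorem~\ref{ehk-hom} by choosing the ideal $I$ cleverly and by controlling the invariants $a$, $f$, $b$ appearing there. First I would reduce to the case of infinite residue field via Remark~\ref{stdrmk}(b)(c): passing to $S = R[Y]_{\fm R[Y]}$ leaves $\ehk$, the multiplicity $\e$, and the CM-type $t$ unchanged, and makes a minimal reduction available. So assume $k$ is infinite, pick a minimal reduction $\bx = x_1,\dots,x_d$ of $\fm$, which is a system of parameters, and note $e := \length(R/(\bx)) = \e(R) = \e$ since $R$ is Cohen-Macaulay. Now apply Theorem~\ref{ehk-hom} with the choice $I = \fm$. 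Then $J = (\bx)^* : \fm \supseteq (\bx):\fm$, and the last inequality of Theorem~\ref{ehk-hom} gives $\ehk(R) \ge e/(e - b + a)$ where $a = \length^*(R/\fm)$ and $b = \length\big(((\bx)^*:\fm)/(\bx)\big)$.

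The two things to pin down are $a$ and $b$. For $a$: since $\fm$ itself appears in a chain $\fm^* = I_0 \subset I_1 = \fm^*$ of length... actually $\length^*(R/\fm) \le \length(R/\fm) = 1$, and it is clearly $\ge 1$ unless $\fm^* = R$, which cannot happen as $\fm \ne R$; so $a = 1$. For $b$: the key point is that $(\bx)^* : \fm \supseteq (\bx) :_R \fm$, and in a Cohen-Macaulay local ring the socle of $R/(\bx)$ has dimension equal to the CM-type $t$ (this is the standard fact that $t = \dim_k \Soc(R/(\bx))$ for $(\bx)$ any system of parameters, equivalently the type of $R$), so $\length\big(((\bx):\fm)/(\bx)\big) = t$. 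Hence $b = \length\big(((\bx)^*:\fm)/(\bx)\big) \ge t$. Plugging $a = 1$ and $b \ge t$ into $\ehk(R) \ge e/(e - b + a)$ and using that $x \mapsto e/(e-x+1)$ is increasing in $x$ on the relevant range gives $\ehk(R) \ge e/(e - t + 1) = \e/(\e - t + 1)$, as desired.

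The main obstacle — really the only subtle point — is verifying the inequality $b \ge t$, i.e.\ that $((\bx)^* : \fm)/(\bx)$ has length at least the CM-type. This rests on $(\bx)^* \supseteq (\bx)$ giving $(\bx)^* : \fm \supseteq (\bx) : \fm$ together with the identification $\length\big(((\bx):\fm)/(\bx)\big) = \dim_k \Soc(R/(\bx)) = t$; one should be slightly careful that the type of $R$ is computed by any parameter ideal, which is the classical characterization of Cohen-Macaulay type and holds because $\Soc(R/(\bx)) \cong \Ext^d_R(k, R)$ is independent of the chosen system of parameters. I would also double-check that we do not need $e - b + a > 0$ to make sense of the bound: if $e - t + 1 \le 0$ the stated inequality is vacuous or trivially true since $\ehk(R) \ge 1 > 0$, and in any case $e = \e \ge t$ always holds for Cohen-Macaulay local rings (with $e \ge t+1$ unless $R$ is regular, where the bound reads $\ehk(R) \ge 1$), so no degenerate case causes trouble.
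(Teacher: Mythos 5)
Your proposal is correct and follows essentially the same route as the paper: reduce to infinite residue field via Remark~\ref{stdrmk}, take a minimal reduction $\bx$ so that $e=\e(R)$, and apply Theorem~\ref{ehk-hom} with $I=\fm$, noting $a=1$ and $b\ge t$. The paper states this in one line; your writeup merely fills in the (correct) justifications for $a=1$, $b\ge t$, and the harmlessness of the degenerate case.
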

\begin{proof}
By Remark~\ref{stdrmk}, we may assume that the residue field is infinite,
so there exists a  s.o.p. 
$\bx$ with  $\e(R) = \length(R/(\bx))$.
Now apply Theorem~\ref{ehk-hom} with $I = \fm$ (so $a=1$ and $b \geq t$).
\end{proof}
\begin{Cor}  Let $(R,\fm)$ be a non-regular, 
  Cohen-Macaulay ring of minimal multiplicity.
Then $\ehk(R) \ge \e(R)/2$.
\end{Cor}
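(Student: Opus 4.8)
The plan is to deduce this immediately from Corollary~\ref{Gor} together with the structure theory of Cohen-Macaulay rings of minimal multiplicity. Recall that $(R,\fm)$ has \emph{minimal multiplicity} when $\e(R) = \edim(R) - \dim(R) + 1$, equivalently (after passing to an infinite residue field) when $\fm^2 = \bx\fm$ for some minimal reduction $\bx$ of $\fm$ generated by a system of parameters. By Remark~\ref{stdrmk} we may assume the residue field is infinite, so such an $\bx$ exists and $\e = \e(R) = \length(R/(\bx))$.

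The key point is that for a Cohen-Macaulay ring of minimal multiplicity, the CM-type $t$ equals $\e - 1$. Indeed, with $\bx$ as above and $\fm^2 \subseteq (\bx)$, the ring $R/(\bx)$ is Artinian with $\fm(R/(\bx))$ killed by $\fm$, hence $\fm/(\bx)$ is itself the socle of $R/(\bx)$ (every element of $\fm/(\bx)$ is a socle element since $\fm \cdot \fm \subseteq (\bx)$). Therefore the socle of $R/(\bx)$ has dimension $\length(R/(\bx)) - 1 = \e - 1$, so the CM-type of $R$ is $t = \e - 1$. Here I use that $R$ is not regular, so $\e \geq 2$ and $\fm \neq (\bx)$, making $\fm/(\bx)$ nonzero.

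Now I would simply substitute $t = \e - 1$ into the bound from Corollary~\ref{Gor}:
\[
\ehk(R) \;\geq\; \frac{\e}{\e - t + 1} \;=\; \frac{\e}{\e - (\e-1) + 1} \;=\; \frac{\e}{2},
\]
which is the desired inequality.

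The only real obstacle is the computation $t = \e - 1$, i.e. confirming that minimal multiplicity forces the socle of $R/(\bx)$ to be as large as possible. This is standard but worth stating carefully: one needs the reduction $\bx$ of $\fm$ to satisfy $\fm^2 = \bx\fm$ (a minimal reduction over an infinite residue field does this precisely because minimal multiplicity means the reduction number is $1$), and then the inclusion $\fm^2 \subseteq (\bx)$ gives $\fm/(\bx) \subseteq \Soc(R/(\bx))$; the reverse inclusion is automatic, and since $R/(\bx)$ is not a field (as $R$ is not regular) this socle is a proper nonzero subspace of codimension $1$. Everything else is a direct appeal to Corollary~\ref{Gor}.
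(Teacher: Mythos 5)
Your proof is correct and follows essentially the same route as the paper: reduce to Corollary~\ref{Gor} after establishing that a Cohen-Macaulay ring of minimal multiplicity has CM-type $t = \e(R)-1$. The only difference is that the paper cites Sally's structure theorem for the type computation, whereas you derive it directly from $\fm^2 = \bx\fm$ forcing $\fm/(\bx)$ to be the entire socle of $R/(\bx)$; both are fine.
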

\begin{proof}
By the structure theorem of Sally, \cite{Sa}, $R$ has type $t =\e(R)-1$.  Hence
$\ehk(R) \ge \e(R)/(\e(R) -(\e(R)-1)+1) = \e(R)/2$.
\end{proof}
\begin{Cor}\label{sfr}
Let
$(R,\fm,k)$ be a local Cohen-Macaulay  ring  of characteristic $p$ 
and dimension $d$.  If
$\ehk(R) < \dfrac{\e}{\e-1}$, then $R$ is Gorenstein and  F-regular
(so strongly F-regular, if $R$ is also excellent). 
\end{Cor}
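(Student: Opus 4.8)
The plan is to combine Corollary~\ref{Gor} with the characterization of strong F-regularity via relative Hilbert-Kunz multiplicities in Proposition~\ref{infehk}, reducing to the excellent (indeed complete) case along the way. First I would observe that if $\ehk(R) < \e/(\e-1)$, then Corollary~\ref{Gor} gives $\e/(\e-t+1) \le \ehk(R) < \e/(\e-1)$, which forces $t \le 1$, i.e.\ $R$ has CM-type $t=1$, so $R$ is Gorenstein. (One should note the degenerate case $\e = 1$: then $R$ is regular by the classical characterization, hence Gorenstein and strongly F-regular, and there is nothing to prove; so assume $\e \ge 2$.)

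Next I would establish F-regularity. Since $R$ is Gorenstein, by the remark after Definition (F-rational etc.)\ it suffices to prove that $R$ is F-rational, i.e.\ that some (equivalently every) parameter ideal is tightly closed; for Gorenstein rings F-rational implies (weakly) F-regular, and F-regularity then follows by localizing. To get F-rationality I would argue by contradiction: suppose some parameter ideal $(\bx)$ is not tightly closed, so $J := (\bx)^* \supsetneq (\bx)$. Passing to the completion (which, by Remark~\ref{stdrmk}(a), leaves $\ehk$ unchanged, and over which $R$ is still Gorenstein and CM) I may assume $R$ is complete with a weak test element. Now apply Theorem~\ref{ehk-hom} with $I = \fm$, so that $J = (\bx)^* : \fm$. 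Since $(\bx)^* \supsetneq (\bx)$ and $R$ is Gorenstein with $\length(\Soc(R/(\bx))) = 1$, the colon $(\bx)^* : \fm$ strictly contains $(\bx)$; in fact, working in the zero-dimensional Gorenstein ring $R/(\bx)$ whose socle is one-dimensional, one gets $b = \length(J/(\bx)) \ge 2$ provided $(\bx)^* \neq (\bx)$ — here the key point is that the socle of $R/(\bx)$ lies inside $(\bx)^*/(\bx)$, so $J = (\bx)^*:\fm$ contains a strictly larger submodule than if $(\bx)$ were tightly closed. With $a = \length^*(R/\fm) = 1$ and $b \ge 2$, Theorem~\ref{ehk-hom} yields
\[
\ehk(R) \;\ge\; \frac{\e}{\e - b + a} \;\ge\; \frac{\e}{\e - 2 + 1} \;=\; \frac{\e}{\e-1},
\]
contradicting the hypothesis $\ehk(R) < \e/(\e-1)$. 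Hence $(\bx)$ is tightly closed, $R$ is F-rational, and (being Gorenstein) F-regular; if moreover $R$ is excellent, F-regular Gorenstein rings are strongly F-regular.

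The main obstacle I anticipate is the bookkeeping in the colon-ideal step: one must verify carefully that $(\bx) \subsetneq (\bx)^*$ in a Gorenstein Artinian-modulo-parameters setting really does force $b = \length(((\bx)^*:\fm)/(\bx)) \ge 2$ rather than merely $\ge 1$. This uses Gorenstein duality: the socle of $R/(\bx)$ is one-dimensional and is the unique minimal nonzero submodule, so it is automatically contained in the nonzero submodule $(\bx)^*/(\bx)$; then $(\bx)^*:\fm$ contains both $(\bx)$-plus-that-socle \emph{and} enough of $(\bx)^*$ itself to push the length past $1$. A clean way to sidestep the subtlety is to instead invoke the Remark following Theorem~\ref{thm:BE} together with Theorem~\ref{ehk-hom} applied directly, or simply to note that the inequality $\ehk(R) < \e/(\e-1)$ already gives F-rationality by Theorem~\ref{thm:BE} and its Remark (once we know $R$ is Gorenstein, so $\e(R) = \e$ plays the role of the bound), and then Gorenstein F-rational is F-regular. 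Either route finishes the proof.
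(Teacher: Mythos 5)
Your argument is correct, and your Gorenstein step is exactly the paper's: by Corollary~\ref{Gor}, non-Gorenstein ($t\ge2$) forces $\ehk(R) \ge \e/(\e-t+1) \ge \e/(\e-1)$, so $R$ is Gorenstein. For F-regularity your fallback Route (b) — invoke the Remark following Theorem~\ref{thm:BE} — is what the paper does (it cites Theorem~\ref{be} itself, but since $\ehk(R)<\e/(\e-1)$ does not in general give $\ehk(R)\le 1+\max\{1/d!,1/\e\}$ for small $\e$, it is really the Remark that is being used). Your primary Route (a), running Theorem~\ref{ehk-hom} a second time with $I=\fm$, is a genuinely more self-contained alternative and worth keeping: it avoids appealing to an argument that lives in the Blickle--Enescu paper rather than here. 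The one spot where you wave your hands — why $(\bx)^* \ne (\bx)$ forces $b = \length\bigl(((\bx)^*:\fm)/(\bx)\bigr) \ge 2$ — does need tightening, and here is the clean version. Pass to the completion and (via Remark~\ref{stdrmk}) to an infinite residue field, and pick $\bx$ a minimal reduction of $\fm$, so $\e = \length(R/(\bx))$. In $A = R/(\bx)$, which is Artinian Gorenstein, the socle $0:_A \fm$ is the unique minimal nonzero submodule, so the nonzero ideal $\bar I := (\bx)^*/(\bx)$ contains it. Hence $\bar J := \bar I :_A \fm \supseteq (0:_A\fm):_A\fm = 0:_A \fm^2$, and Matlis duality over $A$ gives $\length(0:_A\fm^2) = \length(A/\fm^2 A) = 1 + \dim_k(\fm A/\fm^2 A) \ge 2$, because $R$ nonregular means $\fm A \ne 0$ and then Nakayama gives $\fm A \ne \fm^2 A$. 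Thus $b\ge2$, $a=1$, and Theorem~\ref{ehk-hom} yields $\ehk(R) \ge \e/(\e-b+a) \ge \e/(\e-1)$, the desired contradiction; so $R$ is F-rational, hence (being Gorenstein) F-regular, and strongly F-regular if excellent.
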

\begin{proof}
We may assume that $R$ is not regular.  
If $R$ is not Gorenstein then the type of $R$, $t$, is at least $2$.
 Theorem~\ref{ehk-hom} then shows that
$\ehk \geq \dfrac{\e}{\e-t+1} \geq \dfrac{\e}{\e-1}$.
Thus $R$ is Gorenstein, and we are done by Theorem~\ref{be}.
\end{proof}

We can now state the desired generalization of Theorem~\ref{thm:BE}.
The improvement is replacing ``F-rational'' by an appropriate form
of ``F-regular'' in the conclusion. 
\begin{Cor}\label{smallehk} Let
$(R,\fm,k)$ be a formally unmixed ring  of characteristic $p$ and $\dim(R) = d \geq 2$.
  If
$\e_{HK}(R) \leq 1 + \op{max}\{1/d!,1/\e(R)\}$, then $R$ is 
F-regular and Gorenstein. 
If $R$ is excellent then $R$ is strongly F-regular. 
\end{Cor}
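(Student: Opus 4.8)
The plan is to reduce to the case $R=\hat R$, apply Theorem~\ref{thm:BE} to obtain ``Cohen--Macaulay and F-rational'', use the universal bound $\ehk\ge\e/d!$ together with the hypothesis to force $\e(R)\le d!$, then quote Corollary~\ref{sfr}, and finally descend the conclusions from $\hat R$ back to $R$.

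\emph{Reduction to the complete case.} Since $R$ is formally unmixed, $\hat R$ is unmixed, and by Cohen's structure theorem $\hat R$ is a homomorphic image of a regular (hence Cohen--Macaulay) local ring. Completion does not change $\e$ and, by Remark~\ref{stdrmk}(a)(i), does not change $\ehk$; write $\e=\e(R)=\e(\hat R)$. If we know that $\hat R$ is Gorenstein and F-rational, both properties descend: $R$ is Gorenstein, and $R$ is F-rational, because for a system of parameters $\bx$ of $R$ and $u\in(\bx)^{\ast}_R$ one may pick $c\in R^{\circ}$ with $cu^{q}\in(\bx)^{[q]}$ for $q\gg0$; as minimal primes of $\hat R$ contract to minimal primes of $R$, $c\in\hat R^{\circ}$, so $u\in(\bx\hat R)^{\ast}=\bx\hat R$ (the last equality because $\hat R$ is F-rational), and thus $u\in\bx\hat R\cap R=\bx R$. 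Then $R$ is F-regular by the Gorenstein/F-rational equivalence recalled in \S\ref{known}, and strongly F-regular if moreover $R$ is excellent. So it suffices to show $\hat R$ is Cohen--Macaulay, Gorenstein and F-rational.

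\emph{The core argument.} Since $\hat R$ is unmixed, a homomorphic image of a Cohen--Macaulay local ring, of dimension $d\ge2$, with $\ehk(\hat R)\le 1+\max\{1/d!,1/\e\}$, Theorem~\ref{thm:BE} gives that $\hat R$ is Cohen--Macaulay and F-rational. If $\hat R$ is regular, then $R$ is regular and we are done, so assume otherwise; being Cohen--Macaulay and nonregular, $\hat R$ has $\e\ge2$. I claim $\ehk(\hat R)<\e/(\e-1)$. If $\e\le d!$, then $\max\{1/d!,1/\e\}=1/\e$, so $\ehk(\hat R)\le1+1/\e<1+1/(\e-1)=\e/(\e-1)$. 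If $\e>d!$, then $\max\{1/d!,1/\e\}=1/d!$; combining $\ehk(\hat R)\ge\e/d!$ with $\ehk(\hat R)\le1+1/d!$ forces $\e\le d!+1$, hence $\e=d!+1$ and $\ehk(\hat R)=1+1/d!=\e/d!$ --- but for a local ring of dimension $d\ge2$ the inequality $\ehk\ge\e/d!$ is strict, a contradiction. Hence $\e\le d!$ and $\ehk(\hat R)<\e/(\e-1)$, so Corollary~\ref{sfr} shows $\hat R$ is Gorenstein and F-regular, in particular F-rational. Combined with the reduction step, $R$ is Gorenstein and F-rational, hence F-regular, and strongly F-regular when $R$ is excellent.

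\emph{The main obstacle.} Every step above is routine except one: the strictness of $\ehk(\hat R)>\e(\hat R)/d!$ in dimension $d\ge2$, which is exactly what eliminates the borderline value $\e=d!+1$ (equivalently, it upgrades the non-strict hypothesis bound to the strict inequality that Corollary~\ref{sfr} requires). In dimension $\le4$ one could instead appeal to the Watanabe--Yoshida lower bounds, but the aim here is a dimension-free argument, so the real content is to prove this strictness directly --- for instance by a term-by-term comparison showing that the defect $\length(\m^{q}/\m^{[q]})$ between $\length(\hat R/\m^{[q]})$ and $\length(\hat R/\m^{q})$ has growth order exactly $q^{d}$ once $d\ge2$. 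That is the crux; the rest is bookkeeping with Theorems~\ref{thm:BE} and~\ref{ehk-hom}, Corollary~\ref{sfr}, and Remark~\ref{stdrmk}.
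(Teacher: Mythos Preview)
Your argument is correct and follows essentially the same route as the paper: reduce to the completion, invoke Theorem~\ref{thm:BE} to get Cohen--Macaulay, then use the bound $\ehk(R) > \e(R)/d!$ together with the hypothesis to rule out $\e(R) \ge d!+1$, after which $\ehk(R) \le 1 + 1/\e < \e/(\e-1)$ and Corollary~\ref{sfr} finishes. The paper's proof is organized slightly differently (it first derives a contradiction from ``not strongly F-regular'' via the contrapositive of Corollary~\ref{sfr}, then separately shows $\e \le d!$ to conclude Gorenstein), but the ingredients and the logical dependencies are identical.

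The one point on which you are too cautious is your ``main obstacle.'' The strict inequality $\ehk(R) > \e(R)/d!$ for $d \ge 2$ is not something that needs to be established here: it is a theorem of Hanes~\cite{Ha}, and the paper simply cites it at exactly the spot where you need it (to eliminate the borderline case $\e = d!+1$). So there is no gap---your proof is complete once you replace the last paragraph with a reference to~\cite{Ha}. Your suggested approach to proving the strictness (showing $\length(\m^q/\m^{[q]})$ has order $q^d$) is in fact the idea behind Hanes' argument, so your instinct was right, but the work has already been done.
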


\begin{proof}  Let $\e =\e(R)$. We can pass to the completion and assume that $R$ is
  complete and unmixed. One should note that, for an excellent Gorenstein ring, strong F-regularity and F-regularity are equivalent.
Moreover if the completion of a ring $R$ is F-regular, then $R$ is F-regular.

Hence by Theorem~\ref{be} we
may assume that $R$ is Cohen-Macaulay.  

If $R$ is not strongly F-regular, then 
 $\ehk(R) \ge \e/(\e-1) > 1 + 1/\e$.  
So, $1 + \dfrac{1}{d!} \geq \ehk(R) > 1 + \dfrac{1}{\e} $ 
which implies that $\e > d!$, and therefore
 $\ehk(R) \ge  \dfrac{\e}{d!} > \dfrac{d!+1}{d!}$, which is a
 contradiction.

If $\e \geq d!+1$, then
since $\ehk(R) > \e /d!$ (this inequality is due to Hanes,~\cite{Ha}), 
we have $\ehk(R) > 1 + 1/d! > 1+1/\e$, a contradiction. Thus $\e \leq d!$,
so $\ehk(R) \leq 1 + 1/\e < \e/(\e-1)$, which implies that $R$ is
Gorenstein.

\end{proof}

It should be remarked that Corollaries 3.5 and 3.6 are closely related to recent unpublished results of D.~Hanes who
independently proved in particular that under the assumptions of Corollary 3.6, the ring $R$ is Gorenstein and F-regular.

We get some interesting results from Theorem~\ref{ehk-hom} when we can
apply it to Gorenstein rings which are not F-regular.
\begin{Cor}\label{Gor-ehk}
Let $(R,\m)$ be a Gorenstein ring of dimension $d$ and embedding dimension
$v = \mu(\m)$. Let $\e = \e(R)$. If either $R$ or $\hat R$ is not  F-regular, then
$$\ehk(R) \ge \dfrac {\e} {\e-v+d}.$$
\end{Cor}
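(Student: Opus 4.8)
The plan is to apply Theorem~\ref{ehk-hom} to a carefully chosen ideal $I$, exploiting the fact that a non-F-regular Gorenstein ring has parameter ideals that are not tightly closed, so that the gap between $\length^*$ and the naive inputs is controlled. First I would reduce to the case that $R$ is complete and has infinite residue field: by Remark~\ref{stdrmk}, completing and then adjoining an indeterminate and localizing leaves $\ehk(R)$, $\e(R)$, and $v=\mu(\m)$ unchanged, and the hypothesis ``$R$ or $\hat R$ not F-regular'' passes to (and is implied by) the corresponding hypothesis on the completion. A complete non-F-regular Gorenstein local ring is not weakly F-regular, and since Gorenstein rings are F-rational iff F-regular, $R$ is in fact not F-rational: some parameter ideal fails to be tightly closed. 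Using the infinite residue field, I can pick a minimal reduction $\bx = x_1,\dots,x_d$ of $\fm$ with $\e = \e(R) = \length(R/(\bx))$, and, after possibly a further general position argument, arrange that $(\bx)$ itself is not tightly closed, i.e.\ $(\bx)^* \supsetneq (\bx)$.

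The key step is the choice of $I$. I would take $I = \fm$, exactly as in Corollary~\ref{Gor}, so that $a = \length^*(R/\fm) = 1$. Then $J = (\bx)^*:\fm$, and the relevant quantity is $b = \length\bigl(((\bx)^*:\fm)/(\bx)\bigr)$, which by Theorem~\ref{ehk-hom} gives $\ehk(R) \ge e/(e - b + 1)$ with $e = \e(R)$. So the whole corollary reduces to showing that $b \ge v - d$, i.e.\ that $((\bx)^*:\fm)/(\bx)$ has length at least $v-d = \mu(\fm) - \dim R = \mu(\fm/(\bx))$, the minimal number of generators of the maximal ideal of the Artinian Gorenstein ring $\bar R = R/(\bx)$. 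Here is where Gorensteinness does the work: $\bar R$ is a zero-dimensional Gorenstein local ring, hence its socle $\Soc(\bar R) = 0:_{\bar R}\bar\fm$ is one-dimensional, and more generally Matlis duality over $\bar R$ identifies $0:_{\bar R}\bar\fm^{\,k}$ with the annihilator of $\bar\fm^{k}$, with lengths adding up correctly. The point is that $((\bx)^*:\fm)/(\bx) = 0:_{\bar R}\bar\fm$ inside $\bar R$ \emph{only} if $(\bx)^* = (\bx)$; when $(\bx)^* \supsetneq (\bx)$, the ideal $(\bx)^*/(\bx)$ is a nonzero submodule of $\bar R$, so it contains the socle, and $((\bx)^*:\fm)/(\bx) = \bigl((\bx)^*/(\bx)\bigr) :_{\bar R} \bar\fm$ is at least as large as $0:_{\bar R} \bar\fm$ \emph{plus} a contribution measuring how far $(\bx)^*/(\bx)$ sticks out past the socle. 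The clean inequality I expect to prove is: for any nonzero ideal $\mathfrak a$ of an Artinian Gorenstein local ring $(\bar R, \bar\fm)$, $\length(\mathfrak a :_{\bar R} \bar\fm) \ge \mu_{\bar R}(\bar\fm) = \edim(\bar R)$; applied with $\mathfrak a = (\bx)^*/(\bx)$ this yields $b \ge v - d$, hence $\ehk(R) \ge e/(e-(v-d)+1)$. I would then absorb the $+1$ by noting $v - d \ge 1$ (as $R$ is not regular, being non-F-regular), or more precisely re-run Theorem~\ref{ehk-hom} tracking that with $a=1$ one gets $f + a \le (e - b) + 1$ and, since $f = \length^*(R/J) \le \length(R/J)$ and one can show $\length(R/J) \le e - (v-d)$, conclude $\ehk(R) \ge e/(e - v + d)$ on the nose.

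The main obstacle is the auxiliary Artinian fact: $\length(\mathfrak a :_{\bar R}\bar\fm) \ge \edim(\bar R)$ for every nonzero ideal $\mathfrak a$ of an Artinian Gorenstein local ring. The idea is that $\mathfrak a :_{\bar R}\bar\fm$ is the preimage in $\bar R$ of $\Soc(\bar R/\mathfrak a)$ under $\bar R \twoheadrightarrow \bar R/\mathfrak a$, so $\length(\mathfrak a :\bar\fm) = \length(\mathfrak a) + \length(\Soc(\bar R/\mathfrak a))$; via Matlis duality $\Soc(\bar R/\mathfrak a)$ is dual to $(\bar R/\mathfrak a)/\bar\fm(\bar R/\mathfrak a)$ only when things are cyclic, so instead I would use that $\Soc(\bar R/\mathfrak a) \cong \Hom_{\bar R}(k, \bar R/\mathfrak a)$ is Matlis-dual to $(\bar R/\mathfrak a) \otimes_{\bar R} k$ \emph{is false in general}; the correct route is: $\Soc(\bar R/\mathfrak a) = (0 :_{\bar R/\mathfrak a} \bar\fm)$, and by self-duality of $\bar R$ the ideal $\mathfrak a$ corresponds to $(0:_{\bar R}\mathfrak a)$ with $\length(0:_{\bar R}\mathfrak a) = \length(\bar R) - \length(\mathfrak a)$, and $0:_{\bar R}\mathfrak a \supseteq \Soc(\bar R)$; chasing generators of $\bar\fm$ against a socle generator shows $\mathfrak a:\bar\fm$ meets every ``layer'' and its length is bounded below by $\edim$. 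I will need to write this step carefully, possibly invoking the structure of Gorenstein Artinian rings (e.g.\ via Macaulay's inverse system) rather than a one-line duality count. Everything else — the reductions, the application of Theorem~\ref{ehk-hom}, the bookkeeping of $a,b,f$ — is routine.
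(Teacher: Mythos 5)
Your proposal follows essentially the same route as the paper: reduce to $R$ complete with infinite residue field, take $I=\m$ in Theorem~\ref{ehk-hom} so $a=1$, and bound $b=\length\bigl(((\bx)^*:\m)/(\bx)\bigr)$ from below using the Gorenstein Artinian structure of $\bar R = R/(\bx)$. However, your central auxiliary lemma is off by exactly $1$, and the ``absorb the $+1$'' maneuvers you sketch do not actually close the gap. You want to show $b \ge v-d+1$, since Theorem~\ref{ehk-hom} gives $\ehk(R)\ge e/(e-b+1)$, and plugging in $b\ge v-d+1$ yields the stated $e/(e-v+d)$; your proposed lemma only gives $b\ge v-d = \edim(\bar R)$, and neither ``$v-d\ge1$'' nor ``$\length(R/J)\le e-(v-d)$'' rescues the lost unit (the latter is literally the same bound $b\ge v-d$ restated).

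The good news is that the lemma you actually need is just as easy as the one you stated, and your own discussion hints at it. Since $\bar R$ is Artinian Gorenstein, the socle $\Soc(\bar R) = 0:_{\bar R}\bar\m$ is simple and lies in \emph{every} nonzero ideal; since $R$ is Gorenstein and not F-rational (which, for CM rings, forces \emph{every} parameter ideal to fail to be tightly closed, so no general position argument is needed), the socle representative $u$ of $\bar R$ lies in $(\bx)^*$, i.e.\ $(\bx):\m \subseteq (\bx)^*$. Hence
\[
(\bx)^*:\m \;\supseteq\; \bigl((\bx):\m\bigr):\m \;=\; (\bx):\m^2,
\]
and Matlis self-duality of $\bar R$ gives
\[
\length\bigl((\bx):\m^2/(\bx)\bigr) \;=\; \length\bigl(0:_{\bar R}\bar\m^2\bigr) \;=\; \length\bigl(\bar R/\bar\m^2\bigr) \;=\; 1 + (v-d),
\]
so $b\ge v-d+1$, which is precisely the paper's argument. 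In short: replace your claim ``$\length(\mathfrak a:_{\bar R}\bar\m)\ge\edim(\bar R)$ for nonzero $\mathfrak a$'' by the correct and equally elementary ``$\length(\mathfrak a:_{\bar R}\bar\m)\ge\edim(\bar R)+1$'' (proved by $\mathfrak a\supseteq\Soc(\bar R)$ plus the duality count above), and drop the attempted $+1$ bookkeeping, which as written does not yield the asserted conclusion.
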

\begin{proof}  Non F-regularity passes to the completion, so we may
assume that $R$ is complete.
By Remark~\ref{stdrmk}, we may assume that the
 residue field is infinite, and $\bx$ is s.o.p.~such that
$\e(R) = \length(R/(\bx))$, while preserving the
non-weak-F-regularity of $R$.   If $u$ denotes a socle element modulo $(\bx)$
then $u \in (\bx)^*$.  We can now apply Theorem~\ref{ehk-hom} with
$I  = \m$.  Then $a = \length^*(R/\m) = 1$, and 
$b = \length( ((\bx)^*:\m)/(\bx)) \ge v-d+1$, since in the $0$-dimensional 
Gorenstein ring $S = R/(\bx)$, $(u)S:\m S
= 0:\m^2 S$, and $\length( 0:\m^2 S) = \length(S/\m^2 S) = v-d+1$.
The corollary now follows.
\end{proof}
\begin{Rem}
It is possible, in ``pathological'' cases (e.g., non-excellent) for a
ring to be weakly F-regular, while its completion is not.  Loepp and
Rotthaus, construct such an example, which is Gorenstein, in \cite{LR}.
Corollary~\ref{Gor-ehk} applies in this case.
\end{Rem}
Corollary~\ref{Gor-ehk} can be improved, and this improvement, while interesting
on its own, will also be useful in section~\ref{root-compare}.  We first
establish some notation.  For a graded ring $G = \oplus_{i\ge0} G_i$, finitely
generated over $G_0$ artinian, let $k_i = \length(G_i)$.  If $\length(G) < \8$,
let $r = \max\{i | G_i \ne 0\}$.  We note that if $(S,\n)$ is a Gorenstein
ring of dimension $0$, and $G$ is the associated graded ring of $S$ at $\n$,
then $G_r$ is generated by the image of the socle element, so $k_r = 1$.

\begin{Cor}\label{Gor-ehk2}  Let $(R,\m)$ be a non F-regular
Gorenstein local ring of
dimension $d$ and multiplicity $\e = \e(R)$, 
and let $\bx = x_1,\ldots, x_d$ be a minimal reduction of 
$\m$.  Let $G$ be the associated graded ring of $R/(\bx)$ (at its maximal
ideal), and let $r$ and $k_i$ for $0\le i \le r$ be as above. Then
\[ \ehk(R) \ge \max_{1\le i \le r} \left\{\dfrac {\e}{\e-k_i}\right\}.
\]
As a consequence  
$\ehk(R) \ge 
\dfrac{\e}{\e-\frac{\e-2}{r-1}} \ge \dfrac{r+1}r$.
\end{Cor}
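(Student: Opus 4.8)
The plan is to apply Theorem~\ref{ehk-hom} once for each index $i$ with $1\le i\le r$, to the ideal $I=\m^i+(\bx)$; the case $i=1$ recovers Corollary~\ref{Gor-ehk}, and taking higher powers of $\m$ is what brings in the individual lengths $k_i$. First I would make the standard reductions: as in the proof of Corollary~\ref{Gor-ehk}, non F-regularity passes to the completion, so I may assume $R$ is complete (this changes neither the Gorenstein property, nor $\e$, nor the invariants $r,k_i$, since $R/(\bx)$ is Artinian and hence unaffected by completion). Being Gorenstein and not F-rational, $R$ then satisfies $(\bx)^*\supsetneq(\bx)$; and since $R$ is Cohen-Macaulay, $\length(R/(\bx))=\e((\bx))=\e(\m)=\e$, so the number ``$e$'' of Theorem~\ref{ehk-hom} equals $\e$. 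Write $S=R/(\bx)$: a zero-dimensional Gorenstein local ring of length $\e$, with maximal ideal $\n=\m S$, associated graded ring $G=\gr_\n(S)$, and $\length_S(\n^j)=\sum_{l\ge j}k_l$ for all $j\ge 0$ (so $\n^{r+1}=0$). Finally let $\mathfrak t\subseteq S$ be the image of $(\bx)^*$; it is a nonzero ideal, hence contains the one-dimensional socle $\Soc(S)=0:_S\n$.

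Now fix $i$ with $1\le i\le r$ and apply Theorem~\ref{ehk-hom} with $I=\m^i+(\bx)$ and $J=(\bx)^*:I$. Modulo $(\bx)$, the image of $I$ is $\n^i$ and the image of $J$ is $\mathfrak t:_S\n^i$, so from $\mathfrak t\supseteq 0:_S\n$ I get $J/(\bx)\supseteq(0:_S\n):_S\n^i=0:_S\n^{i+1}$. Hence, by Matlis duality in the zero-dimensional Gorenstein ring $S$,
\[
 b=\length\bigl(J/(\bx)\bigr)\ \ge\ \length_S\bigl(0:_S\n^{i+1}\bigr)\ =\ \length_S\bigl(S/\n^{i+1}\bigr)\ =\ \e-\sum_{l\ge i+1}k_l .
\]
Therefore $f=\length^*(R/J)\le\length(R/J)=\e-b\le\sum_{l\ge i+1}k_l$, while $a=\length^*(R/I)\le\length(R/I)=\length_S(S/\n^i)=\sum_{l=0}^{i-1}k_l$; adding, and using $\sum_{l\ge 0}k_l=\e$, gives $f+a\le\e-k_i$ (these estimates remain valid in the degenerate case $J=R$, where $b=\e$). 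Theorem~\ref{ehk-hom} now yields $\ehk(R)\ge e/(f+a)=\e/(f+a)\ge\e/(\e-k_i)$, and maximizing over $i$ proves the first inequality.

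For the ``as a consequence'' statement, recall $k_0=1$ and, as observed just before the statement, $k_r=1$, so $\sum_{l=1}^{r-1}k_l=\e-2$. Assuming $r\ge 2$, the pigeonhole principle gives $\max_{1\le l\le r-1}k_l\ge(\e-2)/(r-1)$, and since $t\mapsto\e/(\e-t)$ is increasing for $0\le t<\e$ this yields $\ehk(R)\ge\e/\bigl(\e-(\e-2)/(r-1)\bigr)$. After clearing denominators, the remaining inequality $\e/\bigl(\e-(\e-2)/(r-1)\bigr)\ge(r+1)/r$ is equivalent to $\e\ge r+1$, which holds because $\e=\sum_{l=0}^r k_l$ is a sum of $r+1$ positive integers. (If $r=1$ then $\n^2=0$ forces $\e=2$, and $\ehk(R)\ge\e/(\e-k_1)=2=(r+1)/r$ is already contained in the first inequality.)

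I expect the one genuinely delicate point to be the middle step: recognizing $I=\m^i+(\bx)$ as the right ideal to feed into Theorem~\ref{ehk-hom} and then justifying the chain $J/(\bx)=\mathfrak t:_S\n^i\supseteq 0:_S\n^{i+1}$ together with the accompanying Matlis-duality length count. This is precisely where non F-regularity is used (it forces $\Soc(S)\subseteq\mathfrak t$) and where the coefficients $k_i$ enter the estimate $f+a\le\e-k_i$. Everything after that is bookkeeping with lengths and elementary arithmetic.
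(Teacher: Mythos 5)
Your proof is correct and takes essentially the same route as the paper: apply Theorem~\ref{ehk-hom} with $I=\m^j+(\bx)$, use that the socle of $R/(\bx)$ lies in the image of $(\bx)^*$ to get $J\supseteq(\bx):\m^{j+1}$, and then count lengths by Matlis duality in $R/(\bx)$. The only differences are cosmetic: you phrase the socle observation via the ideal $\mathfrak t$ rather than a socle element $u$, and you spell out the $r=1$ edge case (which the paper leaves implicit).
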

\begin{proof}
 Since $R$ is not F-regular, if $u$ denotes a 
socle element modulo $(\bx)$, then $u \in (\bx)^*$.  Thus
$(\bx):\m = (u,\bx)
\inc (\bx)^*$.  We may then apply Theorem~\ref{ehk-hom} with $I = \m^j +(\bx)$
and $J = (\bx)^*:I \supseteq (u,\bx):\m^j = \left((\bx):\m\right):\m^j
= (\bx):\m^{j+1}$.  In this case,
$\length(R/I) = \sum_{i=0}^{j-1} k_i$ and 
$ \length(R/J) = \e - \length(J/(\bx)) \le \e -   \length(R/(\m^{j+1} +(\bx)))=
 \e -(\sum_{i=0}^{j} k_i)$ (Matlis duality and the fact that
$J \subset (\bx):\m^{j+1}$ gives the inequality).  Hence 
\[\ehk(R) \ge \dfrac {\e}{\length^*(R/(\m^j+(\bx))) + \length^*(R/J)}
\ge \dfrac {\e}{\sum_{i=0}^{j-1} k_i + \e -(\sum_{i=0}^{j} k_i)} = \dfrac{\e}
{\e-k_j}.
\]
Since $k_0 = k_r = 1$,  some $k_i \ge \dfrac{\e-1-1}{r-1}$, thus 
$\ehk(R) \ge  \dfrac{\e}{\e - \frac{\e-2}{r-1}}$.  

Some algebra shows that $\dfrac{\e}{\e - \frac{\e-2}{r-1}} \ge \dfrac{r+1}r$
if and only if $\e \ge r+1$.  The latter condition always holds.
\end{proof}

\begin{Cor}\label{Gor-non-F-reg} 
Let $(R,\m)$ be a non F-regular Gorenstein ring of dimension
$d>1$.  Then $\ehk(R) \ge \dfrac {d+1}{d}$.  If $R$ is not a hypersurface,
then $\ehk(R) \ge \dfrac d{d-1}$.
\end{Cor}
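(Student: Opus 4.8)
The plan is to reduce to a convenient situation, read off the numerical data of a minimal reduction, and feed it into Corollary~\ref{Gor-ehk2}, treating the hypersurface case separately. First I would reduce to the case that $R$ is complete with infinite residue field: non-F-regularity passes to the completion (as in the proof of Corollary~\ref{Gor-ehk}), and the operations of Remark~\ref{stdrmk} affect neither $\ehk(R)$ nor the hypotheses while providing a minimal reduction $\bx=x_1,\dots,x_d$ of $\m$ with $\e:=\e(R)=\length(R/(\bx))$. Let $G=\gr(R/(\bx))$ at its maximal ideal, $k_i=\length(G_i)$, and $r=\max\{i:G_i\ne0\}$; then $k_0=k_r=1$, $k_i\ge1$ for $0\le i\le r$, $\sum_{i=0}^r k_i=\e$, $k_1=\mu(\m)-d$, and (as recorded just before Corollary~\ref{Gor-ehk2}) $R$ is a hypersurface if and only if $\mu(\m)=d+1$, equivalently all $k_i=1$, equivalently $r=\e-1$. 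Since $R$ is Gorenstein and not F-regular it is not F-rational, so $(\bx)$ is not tightly closed and in fact $(\bx)^*\supseteq(\bx):\m$ --- the hypothesis underlying Corollary~\ref{Gor-ehk2}. Hence $\ehk(R)\ge\max_{1\le i\le r}\frac{\e}{\e-k_i}$.

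The second step is a short combinatorial reading of this inequality: to get $\ehk(R)\ge\frac{d+1}{d}$ it suffices to find $i\in\{1,\dots,r\}$ with $k_i\ge\frac{\e}{d+1}$ (then $\frac{\e}{\e-k_i}\ge\frac{d+1}{d}$), and for $\ehk(R)\ge\frac{d}{d-1}$ it suffices to find such $i$ with $k_i\ge\frac{\e}{d}$. If $r\le d$ this is immediate by pigeonhole on $k_0,\dots,k_r$ (the edge case $\e=r+1$, which forces all $k_i=1$, is handled directly by $\ehk(R)\ge\frac{\e}{\e-1}=\frac{r+1}{r}\ge\frac{d+1}{d}$). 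If $R$ is not a hypersurface then $k_1\ge2$, so $\ehk(R)\ge\frac{\e}{\e-2}\ge\frac{d}{d-1}$ as soon as $\e\le 2d$; for $\e>2d$ one pigeonholes $k_1,\dots,k_{r-1}$, which (using $\e\ge k_1+2$ and $r\ge2$) produces an index with $k_i\ge\frac{\e}{d}$ whenever $r\le d$, and for $r>d$ one falls back on the classical bound $\ehk(R)\ge\e(R)/d!$. This settles part (ii) and part (i) for non-hypersurfaces, and it also disposes of dimension $2$ entirely: there $\ehk(R)\ge\max\{\frac{\e}{\e-1},\frac{\e}{2}\}\ge\frac32$ in every case.

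The genuinely hard case --- and the one I expect to be the main obstacle --- is that of a hypersurface in dimension $d\ge3$. Here $r=\e-1$ and all $k_i=1$, so Corollary~\ref{Gor-ehk2} yields only $\ehk(R)\ge\frac{\e}{\e-1}$, which gives $\frac{d+1}{d}$ exactly when $\e\le d+1$; for $d+2\le\e$ one must bring in $\ehk(R)\ge\e(R)/d!$ (strict for non-regular $R$, by Hanes) and its refinements, but these still leave a gap in the intermediate multiplicity range $d+2\le\e<(d+1)!/d$. The natural attempt to sharpen Corollary~\ref{Gor-ehk2} by using a deeper system of parameters $\bx'\subset\m^2$ fails: then $\gr(R/(\bx'))=\gr(R)/(\text{leading forms of }\bx')$ is a complete intersection for which $\length(R/(\bx'))$ and $\max_i\length(\gr(R/(\bx'))_i)$ rescale by the same factor, so Theorem~\ref{ehk-hom} again returns only $\frac{\e}{\e-1}$. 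Closing this gap therefore requires a more delicate estimate --- a finer accounting of the star-lengths $\length^*(R/(\m^j+(\bx))^*)$ of the iterated tight closures, or the sharper Hilbert--Kunz lower bounds developed in the later sections and in the companion paper --- and it is this hypersurface estimate, rather than the surrounding pigeonhole on the Hilbert function of $R/(\bx)$, that is the real content of the statement.
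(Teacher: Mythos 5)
Your framework is right up to a point: the reduction to the complete infinite-residue-field case, the invocation of Corollary~\ref{Gor-ehk2}, and the pigeonhole on the Hilbert function of $R/(\bx)$ all match the paper's opening moves, and you correctly identify that the residual difficulty sits in the high-multiplicity regime. But there are two genuine gaps, and they are both symptoms of the same missing idea.

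First, the gap you flag honestly --- the hypersurface case in dimension $d\ge3$ --- is real, and the fallback you reach for ($\ehk(R)\ge\e/d!$, or its refinements) cannot close it in the range $d+2\le\e<(d+1)!/d$, as you say. Second, your non-hypersurface argument is also broken at the branch ``$r>d$ and $\e>2d$'': falling back on $\ehk(R)\ge\e/d!$ there does \emph{not} give $\dfrac d{d-1}$ (for $d=3$, $\e=7$, $r\ge4$ this only yields $\ehk\ge 7/6$, while the target is $3/2$). So the proposal is incomplete even outside the hypersurface case.

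The missing ingredient in both places is the Brian\c con--Skoda theorem fed directly into Theorem~\ref{ehk-hom}, not just through Corollary~\ref{Gor-ehk2}. Once $r\ge d$, Brian\c con--Skoda gives $\m^d\inc(\bx)^*$. Taking $I=\m^{d-1}+(\bx)$ then forces $J=(\bx)^*:I\supseteq\m$, so Theorem~\ref{ehk-hom} yields $\ehk(R)\ge\e/(2+k_1+\cdots+k_{d-2})$; if every $k_i<\e/d$ this is $>\e/(2+(d-2)\e/d)\ge d/(d-1)$ as soon as $\e\ge2d$, which disposes of exactly the case your $\e/d!$ fallback failed to reach. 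And if $\e<2d$ with all $k_i<\e/d$, then $k_i=1$ for all $i$, so $R$ is a hypersurface with $\e=r+1\ge d+1$; writing $\m=(z,\bx)$, Brian\c con--Skoda gives $z^d\in(\bx)^*$, hence $(\bx)^*:\m\supseteq(z^{d-1},\bx)$, and Theorem~\ref{ehk-hom} with $I=\m$ gives $\ehk(R)\ge\e/(1+(d-1))=\e/d\ge(d+1)/d$. That last computation --- an explicit bound on $\length(R/J)\le d-1$ in the hypersurface case via $z^d\in(\bx)^*$ --- is precisely ``the more delicate estimate'' you correctly suspected was needed but did not supply.
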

\begin{proof}
By Remark~\ref{stdrmk} we may assume that $R$ is complete with
infinite residue field and  that $\bx$ is a s.o.p. which is a minimal reduction
of $\m$.

Let $G$ and $r$ be as in the proof
of Corollary~\ref{Gor-ehk2}.  The result of  Corollary~\ref{Gor-ehk2}
suffices if $r +1\le d$.  So we may assume that $r \ge d$. 
By the Brian\c con-Skoda
Theorem, $\m^d \inc (\bx)^*$.   

Let $\e = \e(R)$ be the multiplicity.  It is easy to see that  for any
integer  $n \le \e$, $\dfrac {\e}{\e -n} \ge \dfrac d{d-1}$ if and only
if $n \ge \e/d$.  By Corollary~\ref{Gor-ehk2}, we are done if some $k_i
\ge \e/d$,  so assume that each $k_i < \e/d$.

Let $I = \m^{d-1} + (\bx)$.  Then $(\bx)^*:I \supseteq \m$ (by the Brian\c con-Skoda
Theorem), so by Theorem~\ref{ehk-hom}, 
$\ehk(R) \ge \dfrac {\e}{\e-(\e-1)+1+k_1+\cdots + k_{d-2}}
= \dfrac{\e}{2 + k_1+\cdots + k_{d-2}}$.  Since each $k_i < \e/d$ we
get $\ehk(R) > \dfrac {\e}{2 + (d-2)(\e/d)}$, and the right hand side
is easily seen to be at least $\dfrac d{d-1}$ provided that $\e \ge 2d$.

The only case left is if $\e < 2d$.  
Then $2d > e > dk_i$ for all $k_i$ implies that each $k_i=1$, i.e., $R$
is a hypersurface, and $\e = r+1$ (and, recall, $r\ge d$).  
Say $\m = (z,\bx)$ minimally.  By the Brian\c con-Skoda theorem, $z^d\in
(\bx)^*$, so $(\bx)^*:\m \supseteq (z^d,\bx):z \supseteq (z^{d-1},\bx)$.
Applying Theorem~\ref{ehk-hom} with $I = \m$ gives
$\ehk(R) \ge \dfrac {\e}{1+ d-1} = \dfrac {\e}d \ge \dfrac{d+1}d$.
\end{proof}

\section{Radical extensions and comparison of Hilbert-Kunz multiplicities}
\label{root-compare}

In this section, we will develop a technique that, in conjuction with the
results  obtained so far, will give a lower bound for the Hilbert-Kunz
multiplicity of unmixed non-regular local rings of dimension $d$ that depends
only on $d$, and is strictly greater than $1$, hence showing that $\epsilon(d)
>0$. This answers one of the open questions mentioned in the Introduction.

We will need to use a result of Watanabe and Yoshida (\cite{WY} Theorem 2.7 and \cite{WY3} Theorem 1.6).  For a domain $R$ we use $Q(R)$ for the fraction
field of $R$, and $R^+$ for the absolute integral closure of $R$ (i.e.,
an integral closure of $R$ in an algebraic closure of $Q(R)$).

\begin{Thm}\label{extendehk}
Let $(R,\m) \into (S,\n)$ be a module-finite extension of local domains.
Then for every $\m$-primary ideal $I$ of $R$, 
\begin{equation}
\e_{HK}(I) = \dfrac{\e_{HK}(IS)}{[Q(S):Q(R)]} \cdot [S/\n:R/\m].
\end{equation}
\end{Thm}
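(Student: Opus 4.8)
The plan is to compute $\ehk(I;S)$ --- the Hilbert--Kunz multiplicity of $I$ acting on $S$ viewed as a finitely generated $R$-module and measured with $\length_R$ --- in two independent ways and then compare the answers. I would begin with three routine observations. Since $S$ is module-finite over the domain $R$ and is itself a domain, $\dim S = \dim R = d$. Because everything has characteristic $p$, the expansion $(\sum_j i_j t_j)^q = \sum_j i_j^q t_j^q$ shows $I^{[q]}S = (IS)^{[q]}$ (the right-hand side formed inside $S$), and $IS$ is $\n$-primary since $\sqrt{\m S}=\n$. Finally $[S/\n:R/\m]$ is finite, as $S/\m S$ is module-finite over the field $R/\m$ and $S/\n$ is a quotient of it.

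The first computation compares $\length_R$ with $\length_S$. For an $S$-module $M$ of finite length, refining an $S$-composition series (all of whose factors are $S/\n$) to an $R$-filtration gives $\length_R(M) = [S/\n:R/\m]\,\length_S(M)$. Taking $M = S/I^{[q]}S = S/(IS)^{[q]}$, dividing by $q^d$, and letting $q\to\8$ yields
\[
\ehk(I;S) \;=\; \lim_{q\to\8}\frac{\length_R(S/I^{[q]}S)}{q^d} \;=\; [S/\n:R/\m]\cdot\ehk(IS),
\]
where the middle expression makes sense precisely because $\dim S = d$.

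For the second computation I would invoke the associativity formula (Proposition~\ref{assoc}) applied to the finitely generated $R$-module $S$. As $R$ is a domain, $\Assh(R)=\{(0)\}$, and localizing at $(0)$ gives $S_{(0)} = (R\setminus\{0\})^{-1}S$, which is a domain module-finite over the field $Q(R)$, hence a field containing $S$, namely $Q(S)$; thus $\length_{R_{(0)}}(S_{(0)}) = \dim_{Q(R)}Q(S) = [Q(S):Q(R)]$. The formula then reads
\[
\ehk(I;S) \;=\; [Q(S):Q(R)]\cdot\ehk(I;R) \;=\; [Q(S):Q(R)]\cdot\ehk(I).
\]
Equating the two expressions for $\ehk(I;S)$ and solving for $\ehk(I)$ gives the stated identity.

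The argument is essentially bookkeeping, so I do not expect a serious obstacle; the one point requiring care is the identification $S_{(0)} = Q(S)$, i.e.\ $\rank_R S = [Q(S):Q(R)]$. If one preferred to avoid the module form of the associativity formula, the alternative is to choose $0\ne c\in R$ with $cS\subseteq R^{\oplus n}$ (where $n=\rank_R S$) and trap $\length_R(S/I^{[q]}S)$ between bounded multiples of $n\,\length_R(R/I^{[q]})$, using that $S/R^{\oplus n}$ has dimension $<d$. There the main technical point would be controlling the cokernel and $\tor$ error terms, which is exactly what Lemma~\ref{cbound} is designed to handle, so I regard the associativity-formula route as the cleaner one.
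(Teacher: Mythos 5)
The paper does not prove this theorem; it simply cites it as a result of Watanabe and Yoshida (\cite{WY}, Theorem 2.7; \cite{WY3}, Theorem 1.6), so there is no in-paper proof to compare against. Your argument is correct and is the natural one: compute $\lim_{q\to\8}\length_R(S/I^{[q]}S)/q^d$ two ways, once by converting $S$-lengths to $R$-lengths via $\length_R(-) = [S/\n:R/\m]\,\length_S(-)$ on finite-length $S$-modules, and once via the associativity formula (Proposition~\ref{assoc}) applied to the finitely generated $R$-module $S$, where $\Assh(R)=\{(0)\}$ because $R$ is a domain and the single coefficient is $\length_{Q(R)}(S\otimes_R Q(R))$. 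The identification $S\otimes_R Q(R)=Q(S)$, which you rightly single out, holds because that localization is a finite-dimensional $Q(R)$-algebra that is a domain, hence a field containing $S$ and contained in $Q(S)$, hence equal to $Q(S)$; and the normalizing power $q^d$ is the same on both sides because $\dim S=\dim R$ for a module-finite extension of domains. In short, the proposal is a correct reconstruction of the (cited) proof.
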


We need the following definition.
\begin{Def} Let $(R,\m)$ be a domain. Let $z \in \m$,  and let $n$ 
be a positive integer.
Let $v \in R^+$ be any  root of $f(X)=X^n-z$.  We call $S = R[v]$ a {\it radical
extension} for the pair $R$ and $z$. 
\end{Def}

\begin{Rem}\label{normalradext}
 Whenever $S$ is radical for $R$ and $ z$, then
$b:=[Q(S):Q(R)] \leq n$.  Assume also that $R$ is normal and $z$ is a
minimal generator of $\m$.  Then in fact, $b = n$.  To see this we need
to show that $f(X) = X^n-z$ is the minimal polynomial for $v = z^{1/n}$ over
$R$.  Let $g(X)$ be the minimal polynomial of $v$ over $Q(R)$.  Since
$R$ is normal, $g(X) \in R[X]$.  The constant term of
$g(X)$ is in $\m$, since $z$ is not a unit.  Then $g(X)|f(X)$ in $R[X]$.  
Say $f(X) = g(X)h(X)$.  Then the constant term of $h(X)$ is a unit (or
else $z \in \m^2$).  But mod $\m$, $g(X)h(X) = X^n$, so in fact, $h(X)$
is a unit constant.
\end{Rem}

In what follows $\n$ will denote the maximal ideal of
$S$, whenever $S$ is local.  Note that if $R$ is a complete local domain and 
$z \in \m$, then $S$ must be local.

\begin{Thm}\label{useroots}
Let $(R,\m)$ be a complete local domain of positive prime characteristic
having algebraically closed residue field.  Let $\bx = x_1,\ldots, x_d$
be a system of parameters, and set $\e = \e_{HK}((\bx)) = \e((\bx))$,
and $a = \length(R/(\bx)^*)$.

Let $z \in \m - (\bx)^*$ be a minimal generator and  let $v \in R^+$ be any
$n$th root of $z$.  Let $S = R[v]$ be a radical extension for $R$ and $z$ and
denote the maximal ideal of $S$ by $\n$. Let $b = [Q(S):Q(R)]$. Then $$\ehk(R)
\geq \dfrac{b(n-1)\e +n \ehk(S)}{b(a(n-1)+1)}.$$  

In the case that $b=n$ this inequality simplifies to
$$\ehk(R) \ge \dfrac{(b-1)\e + \ehk(S)}{a(b-1)+1}.$$
\end{Thm}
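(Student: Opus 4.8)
The core idea is to relate Hilbert--Kunz data of $R$ to that of the radical extension $S = R[v]$, $v^n = z$, via Theorem~\ref{extendehk}, and to extract from $S$ a system of parameters whose residue (as measured by star length) in $S$ is controlled by the quantities $a$ and $\e$ attached to $R$. First I would observe that $S$ is a module-finite extension of the complete local domain $R$, hence itself local with maximal ideal $\n$, and that the residue field extension $S/\n \supseteq R/\m$ is trivial since $R/\m$ is algebraically closed. Thus Theorem~\ref{extendehk} reads $\ehk(IS) = b\cdot \ehk(I)$ for every $\m$-primary $I$, where $b = [Q(S):Q(R)]$. In particular $\ehk(R) = \ehk((\bx))/b \cdot$\,(correction terms) — more precisely I want to feed a carefully chosen ideal of $S$ into a lower bound for $\ehk(S)$ and solve for $\ehk(R)$.

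**Key steps.** Consider in $S$ the ideal $(\bx)S$, together with the element $v$: note $v^n = z$, and $z \in \m \subseteq R$, so $v$ is integral over $R$ and $v \in \n$. The natural ideal to look at is $(\bx, v^{n-1})S$ or rather the chain built from $(\bx)S$ by adjoining powers of $v$; the point is that modulo $(\bx)S$ the element $v$ satisfies $v^n = z$, so $v^n \in (\bx,z)S$. Now $z \notin (\bx)^*$ in $R$, and $\length(R/(\bx)^*) = a$, so the star length of $(\bx,z)$ (equivalently $(\bx)^* + (z)$) in $R$ past $(\bx)$ is at most $a$ — adjoining $z$ moves us inside an ideal of colength $\le a$ above $(\bx)^*$. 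I would combine this with Proposition~\ref{star}(b): $\ehk((\bx)S) \le \ehk(JS) + \length^*_S(JS/(\bx)S)\,\ehk(S)$ for an intermediate ideal $J$, and I would chain together $n-1$ copies of a ``multiply by $v$'' step, each of which raises the relevant colength in $S$ by a factor governed by $a$ (for the $z$-part) plus $1$ (for the fresh generator $v^{j}$). Tracking this chain gives an inequality of the shape
\begin{equation*}
\ehk((\bx)S) \le (n-1)\ehk((\bx,z)S\text{-type ideal}) + \big(a(n-1)+1\big)\ehk(S),
\end{equation*}
and then $\ehk((\bx,z)S) = \ehk((\bx)S) - (\text{gain})$ together with $\ehk((\bx)S) = b\,\e$ and $\ehk(R) = \ehk(\m\text{-primary ideal})$ back in $R$ lets me solve. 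The numerator $b(n-1)\e + n\,\ehk(S)$ suggests that the ``gain'' at each of the $n$ stages of the filtration $0 \subseteq vS/(\bx)S \subseteq v^2S/(\bx)S \subseteq \cdots$ contributes an $\e$ (from the $z = v^n$ relation, pulled back with the factor $b$) while the final module $S/(\bx,v)S$ contributes $\ehk(S)$ with coefficient matching the $n$ successive quotients.

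**The main obstacle.** The delicate point is the bookkeeping that converts ``$z \notin (\bx)^*$ with $\length(R/(\bx)^*)=a$'' into a bound on star length \emph{in $S$} of the ideal generated by $(\bx)S$ and successive powers of $v$. One has to show that adjoining $v^j$ to $(\bx, v^{j-1}\dots)S$ is, up to tight closure, like adjoining a single socle-type element and then a piece coming from the $z$-direction of colength $\le a$ — i.e. that $\length^*_S$ of each step is at most $a+1$ (or that the total over $n-1$ steps is $a(n-1)+1$, with one step cheaper because $v^n$ collapses into the already-controlled $(\bx,z)S$). This requires knowing that star length behaves well under the module-finite extension $R \to S$ and under the specific relations $v^n = z$; I would prove it by writing $S$ explicitly as $R[X]/(X^n - z)$ (using Remark~\ref{normalradext} when $b=n$, and in general that $b \mid n$), giving $S$ a basis $1, v, \dots, v^{b-1}$ over $R$ (after passing to the minimal polynomial of degree $b$), and computing the filtration of $S/(\bx)S$ by powers of $v$ directly, identifying each graded piece with a quotient of $R$ by an ideal trapped between $(\bx)$ and $(\bx)^* + (z)$. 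The passage $b=n$ is the clean case where each of the $b$ graded pieces is a copy of $R/(\text{colength} \le a)$ except the top, and the stated simplification $\ehk(R) \ge ((b-1)\e + \ehk(S))/(a(b-1)+1)$ falls out by dividing the general inequality through by $b = n$.
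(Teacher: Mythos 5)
Your proposal gets the right scaffolding (pass to the radical extension $S = R[v]$, use Theorem~\ref{extendehk} with trivial residue extension to relate $\ehk(IS)$ to $b\,\ehk(I)$, and filter using powers of $v$), but the central mechanism is missing, and the arithmetic you sketch would not produce an inequality in which $\ehk(R)$ appears.

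The crucial ingredient the paper uses, and which your sketch does not supply, is the monotonicity observation (Remark~\ref{nestedehk}): for a fixed ideal $I$ and element $v$, the relative Hilbert--Kunz jumps $\ehk\bigl((I,v^j),(I,v^{j-1})\bigr)$ are \emph{non-increasing} in $j$, because $(I,v^j)^{[q]}:v^{(j-1)q} \subseteq (I,v^{j+1})^{[q]}:v^{jq}$. This is what lets the paper compare different slices of the $v$-filtration. Concretely, the paper arranges the saturated chain $(\bx)^* = I_0 \subsetneq \cdots \subsetneq I_{a-1}=\m$ so that the \emph{last} step adjoins $z$, i.e.\ $\m=(I_{a-2},z)$. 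Then two $v$-power chains in $S$ are compared: $\n \supseteq (v,\m S) \supseteq \cdots \supseteq (v^{n-1},\m S) \supseteq \m S$, whose $n-1$ non-increasing jumps sum to $\ehk(\m S,\n)=b\,\ehk(R)-\ehk(S)$, so the smallest jump $y := \ehk(\m S,(v^{n-1},\m S))$ is at most $\ehk(\m S,\n)/(n-1)$; and $\m S=(v^n,I_{a-2})S \supseteq (v^{n+1},I_{a-2})S \supseteq \cdots \supseteq I_{a-2}S$ (using $v^{2n}=z^2\in\m^2\subseteq I_{a-2}$), whose $n$ jumps are each $\le y$ by the same monotonicity. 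This gives $\ehk(I_{a-2}S,\m S)\le ny$, hence $\ehk(I_{a-2},\m)\le n\,\ehk(\m S,\n)/(b(n-1))$ after transferring back to $R$; combined with $\ehk(I_i,I_{i+1})\le\ehk(R)$ for the other $a-1$ steps and $\ehk((\bx)^*)=\e$, one solves for $\ehk(R)$.

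By contrast, your plan of bounding $\ehk((\bx)S)$ via star length in $S$ only produces inequalities of the form $b\e \le (\text{constant})\cdot\ehk(S)$: both $\ehk((\bx)S)=b\e$ and $\ehk(\n)=\ehk(S)$ live entirely in $S$, and Proposition~\ref{star} will never introduce $\ehk(R)$. The quantity you actually need to bound from above is the single $\ehk(I_{a-2},\m)$ step inside $R$, and the $v$-filtration is used not to control star length but to \emph{average}: it decomposes that one step (after extension to $S$) into $n$ pieces, each dominated by a $(n-1)^{-1}$-fraction of the $v$-gap $\ehk(\m S,\n)=b\,\ehk(R)-\ehk(S)$. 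Without Remark~\ref{nestedehk} (or some substitute monotonicity argument), the ``bookkeeping'' you flag as the main obstacle cannot be carried out, because there is no a priori reason the graded pieces of the $v$-filtration have comparable HK contributions. So there is a genuine gap: the proposal lacks the decisive lemma and would not close.
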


\begin{Rem}If we denote $\e_{HK}(R) = 1 + \delta_R$ and $\e_{HK}(S) = 1 + \delta_S$,
then the above is equivalent to  
$$\delta_R \ge \dfrac{b(n-1)(\e-a) + n-b+n\delta_S}{b(a(n-1)+1)},$$
and if $b = n$ this simplifies to 
$\delta_R \ge \dfrac{(b-1)(\e-a)+\delta_S}{a(b-1)+1}$.
\end{Rem}
For the proof of Theorem~\ref{useroots} it is helpful to
note the following
\begin{Rem}\label{nestedehk}
Let $I \inc R$ be an ideal in a local ring $(R,\m)$ and $v\in\m$ an
element such that $(I,v)$ is $\m$-primary.  Then for all
$n \ge 1$, $\ehk((I,v^n),(I,v^{n-1})) \ge \ehk((I,v^{n+1}), (I,v^n))$.

To see this, we observe that for all $q$, $(I,v^n)\brq: v^{(n-1)q} \inc
(I,v^{n+1})\brq: v^{nq}$, so
\begin{align*}
\ehk((I,v^n),(I,v^{n-1})) &= 
\lim_{q\to\8} \dfrac 1{q^d}\length\left(\dfrac{(I,v^{n-1})\brq}{(I,v^{n})\brq}
\right)
= \lim_{q\to\8} \dfrac 1{q^d} \length\left(\dfrac R {(I,v^{n})\brq: v^{(n-1)q}}\right) \\
&\ge \lim_{q\to\8} \dfrac 1{q^d} \length\left(\dfrac R {(I,v^{n+1})\brq: v^{nq}}\right)
= \ehk((I,v^{n+1}), (I,v^n)).
\end{align*}
\end{Rem}

\begin{proof}  

Let $(\bx)^* =I_0 \subsetneq I_1 \subsetneq \cdots \subsetneq I_{a-2}
\subsetneq (I_{a-2},z)=I_{a-1} = \m \subsetneq R$ be a saturated filtration,
and let $w_i \in R$ be an element whose image generates $I_i/I_{i-1}$
(in particular, take $w_{a-1} = z$). 

We can then filter $(\bx)^*S \subseteq S$ by filling in each 
$I_{i-1}S \subseteq
I_{i}S$ with 
$$I_{i-1}S \subseteq (I_{i-1}, v^{n-1}w_i)S \subseteq \cdots
\subseteq (I_{i-1}, vw_i)S \subseteq I_i S$$ (where we allow that some of the containments
may be equalities).

From Theorem~\ref{extendehk}, and the fact that $[S/\n:R/\m] = 1$ ($R/\m$
is algebraically closed), we have that $\ehk(\m S) = b\ehk(\m R)$. 

Thus, $\ehk(\m S,\n) = b\ehk(R)-\ehk(S)$.

By Remark~\ref{nestedehk}, for each  $1\le j < n$,
$\ehk((v^j,\m S), (v^{j-1}, \m S) \ge \ehk((v^{j+1},\m S), (v^{j}, \m S)$.
Hence,  $\ehk((\m S), (v^{n-1}, \m S) \le 
\dfrac{\ehk(\m S,\n)}{n-1}$.

Set  $y : = \ehk((\m S), (v^{n-1}, \m S)$.
Consider the filtration
\begin{equation}\label{eqn:filtration}
\m S = (z,I_{a-2})S \supseteq (zv,I_{a-2})S \supseteq (zv^2,I_{a-2})S 
\supseteq \cdots
\supseteq (zv^{n-1},I_{a-2})S \supseteq I_{a-2}S.
\end{equation}

Remark~\ref{nestedehk} applies to each containment in 
equation~\ref{eqn:filtration},
so each relative Hilbert-Kunz
multiplicity is at most
$\ehk((zv,I_{a-2})S, \m S) = \ehk((v^{n+1}, I_{a-2})S, (v^n,I_{a-2})S)
\le y$. Adding
them all up we get that
$\ehk(I_{a-2}S, \m S) \le n y$.

From this it follows that $\ehk(I_{a-2}S, \m S) \le n \cdot \dfrac{\ehk(\m S,\n)}{n-1}$.  

Using Theorem~\ref{extendehk} to go back to $R$ we have 
$\ehk(I_{a-2}, \m ) \le n  \dfrac{\ehk(\m S,\n)}{b(n-1)} $.  Each of the other
$a-1$ terms in the filtration of $(\bx)^* \subseteq R$ have relative
Hilbert-Kunz multiplicity at most $\ehk(R)$, so we get the inequality
\begin{equation}\label{eqn:totalehk}
\left(n  \dfrac{\ehk(\m S,\n)}{b(n-1)}\right) + (a-1)\ehk(R)
\ge \ehk((\bx)^*) = \e.
\end{equation}

But   
equation~\ref{eqn:totalehk}
yields
 $$\ehk(R) \geq \dfrac{b(n-1)\e +n \ehk(S)}{b(a(n-1)+1)}.$$
\end{proof}

\begin{Cor}
\label{GorFreg}
Let $(R,\m)$ be an F-rational complete non-regular local ring of positive prime characteristic
having algebraically closed residue field.  Let $\bx = x_1,\ldots, x_d$
be a system of parameters and minimal reduction for $\m$, and let $\e = \e(R)= \e_{HK}((\bx)) = \e((\bx))$,

Let $z \in \m - (\bx)$ be a minimal generator and 
let $v \in R^+$ be any $n$th
root of $z$.  Let $S = R[v]$ be a radical extension for 
$R$ and $z$ and denote its maximal ideal of $S$ by $\n$.
 Then 
$$\ehk(R) \ge \dfrac{(n-1)\e + \ehk(S)}{\e(n-1)+1}.$$
\end{Cor}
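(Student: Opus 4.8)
The plan is to obtain this corollary as an immediate specialization of Theorem~\ref{useroots}, so the real content lies in identifying the invariants $a$ and $b$ appearing there once the F-rationality hypothesis is imposed. First I would record the structural consequences of F-rationality: since $R$ is complete it is excellent and a homomorphic image of a regular local ring, so F-rationality forces $R$ to be Cohen--Macaulay and normal. Moreover F-rationality says that every parameter ideal is tightly closed, so in particular $(\bx)^* = (\bx)$.

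With that in hand, in the notation of Theorem~\ref{useroots} we have $a = \length(R/(\bx)^*) = \length(R/(\bx))$, and since $R$ is Cohen--Macaulay and $\bx$ is a minimal reduction of $\m$, $\length(R/(\bx)) = \e((\bx)) = \e(\m) = \e$; thus $a = \e$. Next I would pin down $b = [Q(S):Q(R)]$. The hypothesis $z \in \m - (\bx)$ is precisely $z \in \m - (\bx)^*$, which is what is needed to invoke Theorem~\ref{useroots}; furthermore $z$ is a minimal generator of $\m$ and $R$ is normal, so Remark~\ref{normalradext} applies and gives $b = n$ (not merely $b \le n$).

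We are therefore in the case $b = n$ of Theorem~\ref{useroots}, whose conclusion in that case is $\ehk(R) \ge \frac{(b-1)\e + \ehk(S)}{a(b-1)+1}$. Substituting $a = \e$ and $b = n$ yields exactly
$$\ehk(R) \ge \frac{(n-1)\e + \ehk(S)}{\e(n-1)+1},$$
as claimed.

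There is no genuine obstacle here beyond careful bookkeeping. The two points that actually matter are: (i) it is F-rationality, not merely the Cohen--Macaulay property, that makes $(\bx)^* = (\bx)$ and hence $a = \e$; and (ii) the same hypothesis supplies the normality of $R$ needed to upgrade Remark~\ref{normalradext} from $b \le n$ to $b = n$, which is what lets us use the cleaner simplified form of Theorem~\ref{useroots}.
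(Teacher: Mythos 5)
Your proposal is correct and follows exactly the paper's own argument: apply Remark~\ref{normalradext} to get $b=n$ (using normality from F-rationality), use F-rationality to identify $(\bx)^* = (\bx)$ so that $a=\e$, and substitute into the simplified ($b=n$) form of Theorem~\ref{useroots}. The added remarks about why $z \in \m - (\bx)$ suffices and why $R$ is normal are the same justifications the paper leaves implicit.
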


\begin{proof} By Remark~\ref{normalradext},
$b = [Q(S):Q(R)] = n$.  Since $R$ is F-rational,
 $(\bx)= (\bx)^*$.  Hence one can apply
Theorem~\ref{useroots} together with the observation that $a =\e$.
\end{proof}
\begin{Rem}  Corollary~\ref{GorFreg} can be substantially improved,
but the proof is considerably more difficult.  We will give improved versions
in a later paper, along with improved estimates of lower bounds
for $\epsilon(d)$.
\end{Rem}
\begin{Cor}
Let $(R,\m)$ be a complete local domain of positive prime characteristic
having algebraically closed residue field.  Let $\bx = x_1,\ldots, x_d$
be a system of parameters and minimal reduction for $\fm$, and set $\e = \e_{HK}((\bx)) = \e((\bx))$,
and $a = \length(R/(\bx)^*)$.
 Then $$\ehk (R) \geq \dfrac{\e+1}{a+1}.$$
\end{Cor}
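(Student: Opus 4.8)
The plan is to read this off from Theorem~\ref{useroots} by specializing to square roots ($n=2$) and inserting the trivial bound $\ehk(S)\ge 1$; essentially the only real care is needed for one degenerate case.

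First I would dispose of the case $(\bx)^*=\m$ (equivalently $a=1$). Here $(\bx)\subseteq\m\subseteq(\bx)^*$, so $\ehk(R)=\ehk(\m)=\ehk((\bx)^*)=\ehk((\bx))=\e$, where the equality $\ehk((\bx)^*)=\ehk((\bx))$ is the standard fact that passing to the tight closure leaves the Hilbert--Kunz multiplicity unchanged (immediate from Proposition~\ref{star}, since $\length^*((\bx)^*/(\bx))=0$). Then $\frac{\e+1}{a+1}=\frac{\e+1}{2}\le\e$ because $\e\ge 1$, so the asserted inequality holds, with equality precisely when $R$ is regular.

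Next, assuming $(\bx)^*\subsetneq\m$ (so $a\ge 2$), I would produce an element to which Theorem~\ref{useroots} applies: Nakayama applied to $\m/(\bx)^*$ gives $(\bx)^*+\m^2\subsetneq\m$, so one may choose $z\in\m\setminus\big((\bx)^*+\m^2\big)$, which is then a minimal generator of $\m$ lying outside $(\bx)^*$. Let $v\in R^+$ be a square root of $z$, set $S=R[v]$ (a local ring, since $R$ is a complete local domain), and let $b=[Q(S):Q(R)]\in\{1,2\}$. Applying Theorem~\ref{useroots} with $n=2$, together with $\ehk(S)\ge 1$ (the general bound $\ehk\ge 1$ applied to the local ring $S$, which is module-finite over $R$ and hence Noetherian of dimension $d$), yields
$$\ehk(R)\ \ge\ \frac{b\e+2\,\ehk(S)}{b(a+1)}\ \ge\ \frac{b\e+2}{b(a+1)}.$$
It remains to verify $\frac{b\e+2}{b(a+1)}\ge\frac{\e+1}{a+1}$; clearing the (positive) denominators, this is $b\e+2\ge b\e+b$, i.e.\ $b\le 2$, which holds. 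Since the substance of the argument is already contained in Theorem~\ref{useroots}, the only mild obstacles are the degenerate case $(\bx)^*=\m$ (for which Theorem~\ref{useroots} has no admissible $z$ to be applied to) and checking that the resulting estimate is the same whether $b=1$ or $b=2$, so that no normality assumption on $R$ is needed.
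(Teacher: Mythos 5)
Your proof is correct and follows essentially the same route as the paper: handle the degenerate case $\m=(\bx)^*$ directly, then adjoin a square root ($n=2$) of a minimal generator outside $(\bx)^*$ and specialize Theorem~\ref{useroots} with the trivial bound $\ehk(S)\ge1$. The only differences are cosmetic — you justify the existence of the minimal generator via Nakayama and check the $b=1$ and $b=2$ cases separately, whereas the paper simply notes $n\ehk(S)\ge n\ge b$ so that the $b$'s cancel at once — but the substance is identical.
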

\begin{proof}
If $\m = (\bx)^*$ then $a =1$ and $\ehk(R) = \ehk((\bx)) = \e \ge (\e+1)/2$.

Otherwise, 
take any minimal generator of $\m$ not in $(\bx)^*$ and adjoin a square 
root of it from $R^+$. Then apply the
previous theorem and note that $2 = n \geq b$ and $\ehk(S) \geq 1$, so

$\ehk(R) \geq \dfrac{b(n-1)\e +b}{b(a(n-1)+1)} = 
\dfrac{(n-1)\e+1}{a(n-1)+1} = \dfrac{\e+1}{a+1}$.
\end{proof}

\begin{Rem}
Assume that $(R, \m)$ is CM of type $t$, $I$ a parameter ideal and minimal reduction for $\fm$
such that $I \subsetneq I^* \subsetneq \m$. 
Then $e = \length (R/I)$, and $t = \length ((I : \m )/I)$.

The two ideals $I^*$ and $(I :\m)$ are incomparable in many cases.

However, in the special case when $ (I: \m) \subseteq I^*$ (the Gorenstein case for example), then $t \leq e-a$ and 
$\dfrac{e}{e-t+1} \leq \dfrac{e+1}{a+1}$. So, the above corollary improves 
an earlier result of ours in this case.
\end{Rem}

\medskip

We now begin a construction that will yield a lower 
bound for the Hilbert-Kunz multiplicity of Gorenstein,
F-regular, non-regular local rings.  

So assume that $(R,\m)$ is a Gorenstein F-regular local ring of multiplicity
$\e = \e(R) > 1$.  Note that $R$ must be a normal domain.  We may complete and
by Theorem 3.4  of \cite{Ab}, extend the residue field to assume that it is
algebraically closed. 
Let $\bx = x_1,\ldots, x_d$ be a minimal reduction of $\m$, so that
$\length(R/(\bx)) = \e$.  

\begin{Rem}\label{keepreduction}
Let $R$ and $\bx$ be as above, and suppose $z$, $v= z^{1/n}$ and $S$ are as in
Corollary~\ref{GorFreg}.  Assume, moreover, that $x_1,\ldots, x_{d-1}, z$
is also a minimal reduction of $\m$.  Let $u \in \m$ denote a socle
element modulo $(x_1,\ldots,x_{d-1},z)$.  Then
\begin{itemize}
\item[a)] $x_1,\ldots, x_{d-1}, v$ is a minimal reduction of $\n$ (the
maximal ideal of $S$),
\item[b)] $u$ is still a socle element modulo $(x_1,\ldots,x_{d-1},v)S$, and
\item[c)] $S$ is Gorenstein and $\e(S) = \e(R)$.
\end{itemize}
\end{Rem}
\begin{proof}  Let $\bx_{d-1} = x_1,\ldots, x_{d-1}$.

a) If $\m = (\bx_{d-1},z) + J$, where $\mu(J) = \mu(\m)-d$, then
$\n = (\bx_{d-1},v)S + JS$.  Since $J$ is integral over
$(\bx_{d-1},z)R$, the ideal $JS$ is integral over
$(\bx_{d-1},z)S$, and hence over the larger ideal 
$(\bx_{d-1},v)S$.  This suffices to show (a).

b) If $u \in (\bx_{d-1}, v)S$ then $u \in (\bx_{d-1},z)S \cap R
\inc ((\bx_{d-1},z)R)^* = (\bx_{d-1},z)R$, a contradiction.  
With $J$ as in part (a), we have $\n\,u =((\bx_{d-1},v)S + JS)u
\inc J u S + (\bx_{d-1},v)S \inc (\bx_{d-1},z)S+ (\bx_{d-1},v)S
\inc (\bx_{d-1},v)S$.  Thus $u$ is a socle element.

c) By Remark~\ref{normalradext}, $X^n-z$ is the minimal polynomial of $v$ over
$R$.  Hence $S$ is $R$-free, so flat, with Gorenstein closed
fiber.  Thus $S$  is Gorenstein.
Then $\e(S) = \length_S(S/(\bx_{d-1}, v)) 
= \dfrac 1 n \length_S(S/(\bx_{d-1}, v^n)) 
=\dfrac 1 n\length_S(S/(\bx_{d-1}, z))  = \length_R(R/(\bx_{d-1},z))
= \e(R)$. 
% 
% Let $\e = \e(R)$ and let $R \supsetneq\m = I_1 \supsetneq
% \cdots \supsetneq I_{e-1} \supsetneq (x_1,\ldots, x_{d-1},z)R$ be
% a saturated filtration.  If  $I_j = I_{j+1} + (w_j)$ with
% $\m w_j \inc I_{j+1}$, then $\n w_j = (\m, v)S w_j \inc I_{j+1}S + vS$,
% so the chain
% $S \supseteq \n \supseteq I_2S+vS \supseteq \cdots
% \supseteq I_{e-1} \supseteq (x_1,\ldots, x_{d-1},v)S$
% and part (a) show that $\e(S) \le \length(S/(x_1,\ldots, x_{d-1},v)S)
% \le \e(R)$.
\end{proof}

Let $d = \dim R$ and $k = \mu(\m)-d > 1$.

Note that $\ehk(R) \geq \frac{\e(R)}{d!}$. Hence whenever $\e(R) \geq d!+1$, we
have that $\ehk(R) \geq 1 + \frac{1}{d!}$.

Therefore, if we want to produce a lower bound for $\ehk(R)$ in terms of only
$d$, there is no harm if we fix $\e(R)=\e$ as well. This is so because we can
take the minimum of the lower bounds obtained for fixed $d, \e$ while letting
$\e$ vary between  $2$ and $ d!$. 

The residue field of $R$ is infinite, and so we may pick $y_1,\ldots, y_{d+1}
\in \m - \m^2$ in general position, and therefore, assume that each
$d$-element subset is a minimal reduction of $\m$ (see, for example, 
Theorem 8.6.6 of \cite{HS}, and the comment after it). 
Let $u$ denote a socle element modulo $(y_1,\ldots, y_d)R$, and let 
$r = \max\{\, i \mid u \in \m^i + (y_1,\ldots, y_d)R\}$.  
Set $n = \lceil d/r \rceil$ (so $nr \ge d$).

 Let $R_0 = R$, and
for each $i\ge 1$, let $v_i = y_i^{1/n}$, and set $R_i = R_{i-1}[v_i]$.
For each $i$, write $\ehk(R_i) = 1 + \delta_i$.  

For a given $i\ge1$, if $R_{i-1}$ is F-regular, we may apply 
Corollary~\ref{GorFreg} to $R_{i-1} \inc R_i$ with $\bx
= v_1,\ldots, v_{i-1}, y_{i+1},\ldots, y_{d+1}$ and $z = y_i$
($\bx$ is a minimal reduction of $R_{i-1}$ by Remark~\ref{keepreduction}(a)).
Also, by
Remark~\ref{keepreduction}(b),  $u$ is a socle element modulo 
$(v_1,\ldots, v_i,y_{i+1},\ldots, y_d)R_i$.
We get, noting that the multiplicity stays the same,
\begin{equation}\label{eqn:compareehk}
1 + \delta_{i-1} \ge 1 + \dfrac{1}{\e(R_{i-1})(n-1)+1} \delta_{i}
= 1 + \dfrac 1 {\e(R_0)(n-1)+1} \delta_{i}.
\end{equation}
We claim that for some $i\le d$, $R_i$ is not F-regular.  If not,
then $R_d$ is F-regular.  Let $\m_{R_0} = (y_1,\dots, y_d) + J$
with $\mu(J) = \mu(\m)-d$.  It is then clear that $\m_{R_d} = (v_1,
\ldots, v_d) + J R_d$.
By the Brian\c con-Skoda Theorem $\overline{\m_{R_d}^d} \subseteq
\left((v_1,\ldots, v_d)R_d\right)^*$, so
\begin{align*} u \in  (JR_0)^r
 &\inc \overline{(y_1,\ldots, y_d)^rR_d} = \overline{(y_1^r,\ldots, y_d^r)R_d}
= \overline{(v_1^{rn},\ldots, v_d^{rn})R_d}
\\
&\inc\overline{(v_1^d,\ldots, v_d^d)R_d} 
\inc \left((v_1,\ldots, v_d)R_d\right)^*
= (v_1,\ldots, v_d)R_d
\end{align*} 
a contradiction to 
Remark~\ref{keepreduction}(b).

Assume then, that $i_0 = \min\{i \mid R_i \text{ is not F-regular}\}$.
By Corollary~\ref{Gor-non-F-reg}, 
$\ehk(R_i) \ge \dfrac {d+1}{d} = 1 + \dfrac 1{d}$.
Repeated application of Equation~\ref{eqn:compareehk} yields
\begin{equation*}
\ehk(R) = \ehk(R_0) \ge 1 + \left(\dfrac 1{\e(R)(n-1)+1}\right)^{i_0}
\dfrac 1 {d}.
\end{equation*}

We are now in position to state and prove the main result of the paper.
\begin{Thm}\label{lowerbound} 
Let $\ringR$ be a formally unmixed local ring of positive characteristic $p$
and dimension $d\geq2$.
If $R$ is not regular then 
$$\ehk(R) \geq 1+ \frac{1}{d \cdot (d! (d-1)+1)^{d}}.$$
\end{Thm}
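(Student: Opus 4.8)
\textit{Proof plan.} The plan is to reduce to the situation treated by the construction in the paragraphs preceding the statement, and then simply bound the parameters that appear there. Write $\delta_d=\dfrac{1}{d\,(d!(d-1)+1)^{d}}$, and note $\delta_d<1/d!$ for all $d\ge2$. If $\ehk(R)\ge1+1/d!$ there is nothing to prove, so assume $\ehk(R)<1+1/d!\le1+\op{max}\{1/d!,1/\e(R)\}$. By Corollary~\ref{smallehk}---more precisely by its proof, which passes to $\hat R$---the completion $\hat R$ is Gorenstein and strongly F-regular; it is also non-regular, and $\ehk(\hat R)=\ehk(R)$ by Remark~\ref{stdrmk}(a)(i). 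Replacing $R$ by $\hat R$ and then extending the residue field to be algebraically closed via Theorem~3.4 of \cite{Ab}---which changes none of these properties nor $\ehk$---we may assume that $R$ is a complete, non-regular, Gorenstein, F-regular local ring with algebraically closed residue field. Finally, since $\ehk(R)\ge\e(R)/d!$ by the inequality of Hanes~\cite{Ha}, if $\e(R)\ge d!+1$ then $\ehk(R)\ge1+1/d!>1+\delta_d$ and we are done; so we may also assume $2\le\e(R)\le d!$.

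With these reductions in place the construction preceding the theorem applies verbatim: choosing $y_1,\dots,y_{d+1}\in\m\setminus\m^2$ in general position, a socle element $u$ modulo $(y_1,\dots,y_d)$, setting $r=\op{max}\{i\mid u\in\m^i+(y_1,\dots,y_d)\}$ and $n=\lceil d/r\rceil$, and then building the tower $R_0=R\subseteq R_1\subseteq\cdots$ with $R_i=R_{i-1}[y_i^{1/n}]$, one obtains an index $i_0=\op{min}\{i\mid R_i\text{ is not F-regular}\}$ with $i_0\le d$, together with the inequality
\[
 \ehk(R)\;\ge\;1+\left(\frac{1}{\e(R)(n-1)+1}\right)^{i_0}\cdot\frac1d .
\]
Two elementary observations turn this into the claimed bound. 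First, since $R$ is F-rational the Brian\c con--Skoda theorem gives $\m^{d}\subseteq(\bx)^{*}=(\bx)$ for a minimal reduction $\bx$ of $\m$, so a nonzero socle element modulo $(\bx)$ cannot lie in $\m^{d}$; hence $r\le d-1$, and therefore $2\le n\le d$, that is, $1\le n-1\le d-1$. (It is precisely the bound $n\ge2$ that makes the root adjunctions nontrivial, so this point deserves to be stated explicitly.) Second, $i_0\le d$.

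Now insert the bounds $n-1\le d-1$, $\e(R)\le d!$, and $i_0\le d$. Put $t=\e(R)(n-1)+1$, so that $1<t\le d!(d-1)+1$ and hence $0<1/t<1$. Since $i_0\le d$,
\[
 \left(\frac1t\right)^{i_0}\;\ge\;\left(\frac1t\right)^{d}\;\ge\;\left(\frac{1}{d!(d-1)+1}\right)^{d},
\]
whence $\ehk(R)\ge1+\dfrac1d\left(\dfrac{1}{d!(d-1)+1}\right)^{d}=1+\dfrac{1}{d\,(d!(d-1)+1)^{d}}$, which is the asserted bound. The substance of the argument sits entirely in the construction preceding the statement---in Corollary~\ref{GorFreg}'s root-adjunction estimate and in the Brian\c con--Skoda argument forcing some $R_i$ with $i\le d$ to fail F-regularity; the theorem itself is then just the arithmetic bookkeeping above, and the main obstacle to watch is verifying that the reductions genuinely land us in the complete, algebraically-closed-residue-field, Gorenstein, F-regular setting where that construction is valid.
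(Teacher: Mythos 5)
Your proof is correct and follows essentially the same route as the paper's: reduce to the complete Gorenstein F-regular case with algebraically closed residue field and $\e(R)\le d!$, invoke the root-adjunction construction preceding the theorem, and plug in the bounds $n\le d$, $\e\le d!$, $i_0\le d$. Your explicit note that Brian\c con--Skoda forces $r\le d-1$ (hence $n\ge2$) is a welcome clarification of a point the paper leaves implicit, but it does not change the overall argument.
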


\begin{proof} We can make a faithfully flat extension so we can assume that $k$ is
algebraically closed and that $R$ is also complete.

We can assume that $\ehk(R) <1 +\frac{1}{d!}$ and
hence by Corollary~\ref{smallehk} we have that $R$ is Gorenstein and
F-rational, hence strongly F-regular.

If $\e \geq d!+1$, then $\ehk(R) \geq \frac{\e(R)}{d!} \geq 1 +\frac{1}{d!}.$
So, we can assume that $\e \leq d!$.

Now we are in position to apply the technique described just above the statement
of the Theorem and, noting that $n \le d$ we obtain that

$$\ehk(R) \geq 1+\frac{1}{(d \cdot(\e(d-1)+1)^{d}} 
\geq 1+ \frac{1}{d \cdot (d!(d-1)+1)^{d}}.$$
\end{proof}

\end{document}